\newtheorem{theorem}{Theorem}[section]
\newtheorem{lemma}[theorem]{Lemma}
\newtheorem{proposition}[theorem]{Proposition}
\theoremstyle{definition}
\newtheorem{definition}[theorem]{Definition}
\newtheorem{example}[theorem]{Example}
\newtheorem{conjecture}[theorem]{Conjecture}
\theoremstyle{remark}
\newtheorem{remark}[theorem]{Remark}
\numberwithin{equation}{section}
\title[Deformations via Hilbert-Burch matrices]{Deformations of local Artin rings via Hilbert-Burch matrices}
\author{Roser Homs}
\address{Centre de Recerca Matem\`atica}
\curraddr{}
\email{rhoms@crm.cat}
\thanks{}
\author{Anna-Lena Winz}
\address{Freie Universit\"at Berlin, Arnimallee 3, 14195 Berlin, Germany}
\curraddr{}
\email{anna-lena.winz@fu-berlin.de}
\thanks{}
\subjclass[2020]{Primary 13D02, 14C05}
\date{}
\newcommand{\m}{\mathfrak m}
\def\res{{\bf k}}
\def\HF{\operatorname{HF}}
\def\Hilb{\operatorname{Hilb}}
\def\ord{{\mathrm{ ord}}}
\def\otau{{\overline{\tau}}}
\newcommand{\Lt}{\operatorname{Lt_{\overline{\tau}}}}
\newcommand{\T}{\mathcal{M}}
\newcommand{\Nd}{\mathcal{N}_{<\underline{d}}}
\newcommand{\Ltlex}{\operatorname{Lt_{lex}}}
\newcommand{\Ltau}{\operatorname{Lt_{\tau}}}
\newcommand{\lex}{\operatorname{lex}}
\newcommand{\Lex}{\operatorname{Lex}}
\newcommand{\Gin}{\operatorname{Gin}}
\newcommand{\rk}{\operatorname{rk}}
\begin{document}

\begin{abstract} 
In the local setting, Gr\"obner cells are affine spaces that parametrize ideals in $\res[\![x,y]\!]$ that share the
same leading term ideal with respect to a local term ordering. In particular, all ideals in a cell have the same Hilbert function, so they provide a cellular decomposition of the punctual Hilbert scheme compatible with its Hilbert function stratification. We exploit the parametrization given in \cite{HW21} via Hilbert-Burch matrices to compute the Betti strata, with hands-on examples of deformations that preserve the Hilbert function, and revisit some classical results along the way. Moreover, we move towards an explicit parametrization of all local Gr\"obner cells.
\end{abstract}

\maketitle

\section{Introduction}

The punctual Hilbert scheme $\Hilb^n(\res[\![x,y]\!])$ is a fundamental object in algebraic geometry that parametrizes ideals of colength $n$ in $\res[\![x,y]\!]$. During the decades of the 70s and 80s of the past century several authors \cite{Bri77, BG74,ES,Gal74,Gra83,Iar77,Ya89} studied its geometrical properties. In particular, Briançon \cite{Bri77} and Iarrobino \cite{Iar77} proved that $\Hilb^n(\res[\![x,y]\!])$ is irreducible of dimension $n-1$ for $\res$ algebraically closed.

A natural approach to understand these schemes better is decomposing them into smaller spaces by considering sets of flat deformations of a given colength \(n\) monomial ideal. On one hand, Briançon and Galligo \cite{BG74} introduced the technique of vertical
standard basis stratification of \(\Hilb^n\res[[x, y]]\) consisting on the study of the generators of such deformations. Yaméogo \cite{Ya89} showed that each vertical stratum of \(\Hilb^n\res[[x, y]]\) is isomorphic to an affine space of dimension given by a very simple formula.

On the other hand, Conca and Valla \cite{CV08} focused on studying the syzygies of the deformations of monomial ideals. For a monomial ideal $E$, they used Hilbert-Burch matrices to parametrize the so-called Gr\"obner cell \(V_{\lex}(E)\) of all ideals $J$ with leading term ideal \(\Ltlex(J)=E\) with respect to the lexicographical term ordering $\lex$. When the Hilbert scheme is smooth these cells are affine spaces by \cite{Bia73}. A term ordering \(\tau\) determines a 
\(\res^*\)-action on the punctual Hilbert scheme with isolated fixed points --- the monomial
ideals --- and the \(V_{\tau}(E)\) are all those points in the Hilbert scheme specialising to 
the point \(E\). When one has a smooth scheme with a \(\res^*\)-action with isolated fixed 
points, one obtains a cellular decomposition of that space. This approach has been used to study certain invariants --- such as Betti numbers --- of Hilbert schemes of points of \(\mathbb{A}^2\), \(\mathbb{P}^2\) and \((\mathbb{A}^2,0)\), see \cite{ES,Got90}.

The cellular decomposition of \(\Hilb^n\res[[x, y]]\) resulting from
vertical stratification or equivalently Gr\"obner cells is not compatible with the
Hilbert function stratification of \(\Hilb^n\res[[x, y]]\). In other words, colength \(n\) ideals with different Hilbert functions may belong to the same Gr\"obner cell. Bulding on the works of Rossi and Sharifan \cite{RS10} and Bertella \cite{Ber09} on producing zero-dimensional ideals in \(\res[[x, y]]\) with a given Hilbert function, the authors of the present paper consider in \cite{HW21} a local term ordering $\otau$ to define Gr\"obner cells that overcome this problem.The parametrization of $V_\otau(E)$ in terms of Hilbert-Burch matrices has been completely described for a special class of monomial ideals, that includes the well-studied lex-segment ideals, see \Cref{thm} or \cite[Theorem 5.7]{HW21}.

The goal of the present contribution is twofold: (1) to exploit the parametrization of Gr\"obner cells for lex-segment ideals to study the Betti strata and homogeneous subcells of the subscheme $\Hilb^h(\res[\![x,y]\!])$ of $\Hilb^n(\res[\![x,y]\!])$ that parametrizes ideals with a given Hilbert function \(h\); and (2) to move towards a complete description of a cellular decomposition of $\Hilb^n(\res[\![x,y]\!])$ that is compatible with the Hilbert function stratification.

The paper is structured as follows. \Cref{sec:cells} introduces the machinery needed for studying Gr\"obner cells in the local setting and recalls the fundamental result of \cite{HW21}, namely the parametrization of Gr\"obner cells of lex-segment ideals in terms of canonical Hilbert-Burch matrices.
In \Cref{sec:parametrization}, we focus on the use --- in both practise and theory --- of such matrices to obtain deformations of the lex-segment ideal with desired properties, such as a given minimal number of generators or homogeneity. The main results of this section are \Cref{th:sets_d_gens} and \Cref{prop:hom}, where we parameterize the Betti strata and the homogeneous sub-cell, respectively, of the Gr\"obner cell for lex-segment ideals. Our journey towards these statements includes revisiting classical results from the perspective of canonical Hilbert-Burch matrices.
Finally, \Cref{sec:conjecture} is devoted to cellular decompositions of the punctual Hilbert scheme $\Hilb^n(\res[\![x,y]\!])$ that are compatible with its Hilbert function stratification. We explicitly compute such a decomposition for $n=6$ and briefly describe the Julia programming language module we created to compute Gr\"obner cells of any monomial ideal according to the parametrization in \cite[Conjecture 5.14]{HW21}. We provide computational evidence for the correctness of the conjecture.

\section{Gr\"obner cells in the local setting}\label{sec:cells}

In this section we will introduce the notation and recall some basics on local term orderings. 
We will finish with our main result from \cite{HW21}, which will be applied in the following sections 
to prove and reprove some properties of the punctual Hilbert scheme and its points, codimension two ideals in the power series ring.

\begin{definition}
A term ordering $\tau$ in the polynomial ring $P=\res[x_1,\dots,x_n]$ induces a reverse-degree ordering $\otau$ in $R=\res[\![x_1,\dots,x_n]\!]$ such that for any monomials $m,m'$ in $R$, $m>_{\otau} m'$ if and only if
$$\deg(m) < \deg(m') \mbox{ or }\deg(m) = \deg(m') \mbox{ and } m >_{\tau} m'.$$
\noindent
We call $\otau$ the \textbf{local term ordering} induced by the global ordering $\tau$. 
\end{definition} 

\begin{definition}\label{def:stdbasis} Given an ideal $J$ of $R$, we define the \textbf{leading term ideal} of $J$ as the monomial ideal in $P$ generated by the leading terms with respect to the local degree ordering $\otau$, i.e. $\Lt(J)=\left( \Lt(f): f\in J\right)\subset\res[x_1,\dots,x_n]$.

We call a subset $\lbrace f_1,\dots,f_m\rbrace$ of $J$ a \textbf{$\otau$-enhanced standard basis of $J$} if $\Lt(J)=(\Lt(f_1),\dots,\Lt(f_m))$.
\end{definition}

From the computational point of view, the tangent cone algorithm given by Mora in \cite{Mor82} --- a variant of Buchberger's algorithm to compute Gr\"obner bases of ideals in the polynomial ring --- allows us to explicitly obtain $\otau$-enhanced standard basis of ideals in $\res[\![x_1,\dots,x_n]\!]$ that are generated by polynomials. See \cite[Sections 1.6,1.7,6.4]{GP08} for a detailed treatment and implementation.

From now on we will restrict to the codimension two case. Let \(P = \res[x,y]\), \(R = \res[\![x,y]\!]\) and fix \(x>_\tau y\).

\begin{definition} \label{def:GroebnerCell}
Given a zero-dimensional monomial ideal $E$ in $R$, we denote by $V(E):=\lbrace J\subset R: \Lt(J)=E\rbrace$ the \textbf{Gr\"obner cell} of $E$ with respect to the local term ordering $\otau$. 
\end{definition}

As the name suggests  these are affine spaces by results about group actions of \(\res^*\) on smooth varieties with isolated fixed points of \cite[Theorem~4.4]{Bia73}. 
They have been explicitly described --- at least for a special class of monomial ideals --- for the lexicographical term ordering \cite{CV08}, the degree lexicographical term ordering \cite{Con11} and the local term ordering induced by both of them \cite{HW21}. 

\begin{remark} \label{rem:localstructure}
It is important to note that, as opposed to what happens for global term orderings, ideals in a Gr\"obner cell with respect to a local term ordering preserve the Hilbert function of the monomial ideal, see \cite[Corollary~5.10]{HW21}. 
Indeed, by definition of local term ordering $\Lt(f)=\Ltau(f^\ast)$, where $f^\ast$ is the initial form of $f$. In particular, any $\otau$-enhanced standard basis $\lbrace f_1,\dots,f_m\rbrace$ of $J$ is also a standard basis of $J$, namely the initial ideal $J^\ast$ is generated by the initial forms $f_1^\ast,\dots,f_m^\ast$. Therefore, $\Ltau(J^\ast)=\Lt(J)$ and hence
$\HF_{R/J}=\HF_{P/J^\ast}=\HF_{P/ \Lt(J)}$.
So these Gr\"obner cells are compatible with the Hilbert function stratification of \(\Hilb^n(\res[\![x,y]\!])\) and induce a cellular decomposition of the strata.
\end{remark}

The strategy used in the description of the Gr\"obner cells of the three previously mentioned term orderings consists in focusing directly on the syzygies instead of the generators themselves and to that aim it is convenient to define the notion of canonical Hilbert-Burch matrix.

Note that any zero-dimensional monomial ideal in $\res[\![x,y]\!]$ can be written as $E=(x^t,\dots,x^{t-i}y^{m_i},\dots,y^{m_t})$ with $0=m_0< m_1\leq \dots\leq m_t$.

\begin{definition}\label{canonical}
The \textbf{canonical Hilbert-Burch matrix}\index{canonical Hilbert-Burch matrix} of the monomial ideal $E=(x^t,\dots,x^{t-i}y^{m_i},\dots,y^{m_t})$ is the Hilbert-Burch matrix of $E$ of the form

$$H=\left(\begin{array}{cccc}
y^{d_1} & 0 & \cdots & 0\\
-x & y^{d_2} & \cdots & 0\\
0 & -x & \cdots & 0\\
\vdots & \vdots & & \vdots\\
 0 & 0 & \cdots & y^{d_t}\\
0 & 0 & \cdots & -x
\end{array}\right),$$
\noindent
where $d_i=m_i-m_{i-1}$ for any $1\leq i\leq t$. 
The \textbf{degree matrix}\index{degree matrix} $U$ of $E$ is the $(t+1)\times t$ matrix with integer entries $u_{i,j}=m_j-m_{i-1}+i-j$, for $1\leq i\leq t+1$ and $1\leq j\leq t$.
\end{definition}

It follows from the definition that $u_{i,i}=d_i$ and $u_{i+1,i}=1$, for $1\leq i\leq t$.
 
\begin{definition}\label{def:setT2}
We denote by $\T(E)$ the set of matrices $N=(n_{i,j})$ of size $(t+1)\times t$ with entries in $\res[y]$ such that
\begin{itemize}
\item $n_{i,j}=0$ for any $i\leq j$, 
\item and for \(i>j\) the non-zero entries satisfy $u_{i,j} \leq \ord(n_{i,j}) \leq\deg(n_{i,j})<d_j$.
\end{itemize}
\end{definition}

For the time being we will focus on a special class of monomial ideals called lex-segment ideals. 
\begin{definition}\label{def:lex}
 A monomial ideal $L=(x^t,\dots,x^{t-i}y^{m_i},\dots,y^{m_t})$ is a \textbf{lex-segment ideal} if and only if $0<m_1<\dots<m_t$.
\end{definition}

By Macaulay's theorem \cite{Mac27}, any Hilbert function $h$ has a unique lex-segment ideal $\Lex(h)$, see \cite[Theorem 6.3.1]{HH} for a modern treatment. Consider an ideal $J$ with Hilbert function $h$. In characteristic 0, after possibly a generic change of coordinates on $J$, $\Lt(J)$ coincides with $\Lex(h)$. In other words, the lex-segment ideal is the generic initial ideal with respect to $\otau$. For the reader familiar with generic initial ideals in the global setting, $\Gin_{\otau}(J)=\Gin_{\tau}(J^\ast)$. See \cite[Section 1.4]{Ber09} for an exposition about the notion of generic initial ideal in the local setting.

\begin{theorem}\label{thm}\cite[Theorem~5.7]{HW21} Given a zero-dimensional lex-segment monomial ideal $L\subset R$ with canonical Hilbert-Burch matrix $H$, the map
$$\begin{array}{rrcl}
\Phi_L: & \T(L) & \longrightarrow & V(L)\\
& N & \longmapsto & I_t(H+N)
\end{array}$$

\noindent 
is a bijection, where $I_t(M)$ denotes the ideal of $t$-minors of the matrix $M$. 

\end{theorem}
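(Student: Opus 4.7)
The proof splits into three tasks: well-definedness of $\Phi_L$, surjectivity, and injectivity.

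\emph{Well-definedness.} Let $f_k$ denote the $k$-th maximal minor of $H+N$. Since $H+N$ vanishes strictly above the subdiagonal, its Leibniz expansion reduces to a signed sum over bijections $\sigma:[t]\to[t+1]\setminus\{k\}$ satisfying $\sigma(l)\geq l$ for every $l$. Unfolding the definition of $u_{i,j}$ yields the identity
\[
\sum_{l=1}^{t} u_{\sigma(l),l} \;=\; m_{k-1} + t - k + 1,
\]
valid for every such bijection. Combined with the bound $\ord(n_{i,j})\geq u_{i,j}$, this shows every term in the expansion has total degree at least $m_{k-1}+t-k+1$. A counting argument based on $\sum_l(\sigma(l)-l)=t-k+1$ shows that only the standard bijection $\sigma_{\mathrm{std}}(l)=l$ for $l<k$, $\sigma_{\mathrm{std}}(l)=l+1$ for $l\geq k$, attains the maximal $x$-degree $t-k+1$; it contributes the monomial $\pm x^{t-k+1}y^{m_{k-1}}$ with coefficient $\pm 1$. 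Since $\otau$ is the reverse-degree refinement of $x>_\tau y$, this monomial is exactly $\Lt(f_k)$, the $k$-th generator of $L$, so $\Lt(\Phi_L(N))=L$.

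\emph{Surjectivity.} Given $J\in V(L)$, Mora's tangent cone algorithm produces an $\otau$-enhanced standard basis $\{f_1,\dots,f_{t+1}\}$ of $J$ with $\Lt(f_k)=x^{t-k+1}y^{m_{k-1}}$, and the Hilbert-Burch theorem supplies a minimal free resolution $0\to R^t\xrightarrow{M}R^{t+1}\to J\to 0$ whose maximal minors recover the $f_k$ up to a unit. The plan is to bring $M$ to the canonical form in two steps: (1) invertible row and column operations normalize the diagonal to $y^{d_j}$ and subdiagonal to $-x$, using that the degree matrix $U$ dictates the initial form of each syzygy entry; (2) each strictly lower-triangular entry is reduced modulo the pivot $y^{d_j}$ of its column, producing an element of $\res[y]$ of degree $<d_j$. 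The order bound $\ord(n_{i,j})\geq u_{i,j}$ is then automatic, since any violation would force a lower-degree monomial into some minor, contradicting the prescribed $\Lt(f_k)$.

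\emph{Injectivity.} If $\Phi_L(N)=\Phi_L(N')$, then $H+N$ and $H+N'$ are Hilbert-Burch matrices of the same ideal, hence related by invertible changes of basis of the resolution. The constraints of $\T(L)$ single out a unique representative in each equivalence class: any transformation preserving the diagonal $y^{d_j}$ and subdiagonal $-x$ reduces to the identity modulo the degree bound $\deg(n_{i,j})<d_j$, which forces $N=N'$.

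The main obstacle lies in the normalization step of surjectivity: showing that row and column reductions can always be performed while preserving the ideal of minors and producing a matrix in $H+\T(L)$. This rests on the interplay between the degree matrix $U$ and the reduced-form conditions, together with the lex-segment hypothesis $m_1<\cdots<m_t$ (hence $d_j\geq 1$), which guarantees that reductions modulo $y^{d_j}$ are nontrivial and that $\T(L)$ has the expected affine structure.
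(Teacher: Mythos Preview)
The paper does not prove this theorem: it is quoted from the authors' earlier work \cite[Theorem~5.7]{HW21} and used as a black box throughout. There is therefore no in-paper argument to compare your proposal against; what can be said is whether your sketch is a plausible reconstruction of such a proof.

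Your architecture --- well-definedness, surjectivity, injectivity --- is the right one, and you correctly flag the normalization step as the crux. Two substantive gaps remain, however. First, in well-definedness your Leibniz computation shows only that each minor $f_k$ has $\Lt(f_k)$ equal to the $k$-th generator of $L$, hence $L\subseteq\Lt(I_t(H+N))$; it does not show equality. For that you must verify that the $f_k$ form a $\otau$-enhanced standard basis, e.g.\ by checking a Mora--Buchberger criterion using the columns of $H+N$ as syzygies. The present paper itself points to \cite[Lemma~4.4]{HW21} for precisely this step. Second, in surjectivity your step~(1) assumes the syzygy matrix $M$ already has entries with initial forms governed by $U$; justifying this for an arbitrary $\otau$-enhanced standard basis is nontrivial. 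Moreover the entries of $M$ a priori involve both $x$ and $y$, so ``reduce modulo $y^{d_j}$'' in step~(2) is meaningful only after the $x$-content has been cleared by column operations against the subdiagonal $-x$'s --- and controlling the interaction of those operations with the order bounds is exactly where the lex-segment hypothesis does real work. Your closing paragraph acknowledges this obstacle but does not resolve it; as written the surjectivity argument is an outline rather than a proof.
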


This theorem allows us to define the canonical Hilbert-Burch matrix of any zero-dimensional ideal $J\subset R$ that can be obtained as a deformation of a lex-segment ideal $L$ as $H+\Phi_L^{-1}(J)$, where $H$ is the canonical Hilbert-Burch matrix of the monomial ideal $\Lt(J)$. In particular, the maximal minors of $H+\Phi_L^{-1}(J)$ form a \(\otau\)-enhanced standard basis of \(J\).

In characteristic 0, the affine space $V(L)$ encodes all ideals with a given Hilbert function up to generic change of coordinates. In other words, the map $\Phi_L$ provides a parametrization via canonical Hilbert-Burch matrices of all ideals with Hilbert function $h=\HF_{P/L}$ up to generic change of coordinates.
For a formal version of this statement, see \cite[Corollary 5.10]{HW21}. See \Cref{sec:conjecture} for a discussion about the description of Gr\"obner cells and the notion of canonical Hilbert-Burch matrix beyond the lex-segment case.

\section{Parametrizing ideals with a given Hilbert function}\label{sec:parametrization}

In this section we will showcase how to exploit canonical Hilbert-Burch matrices to obtain all ideals with a given Hilbert function, and special additional properties, up to generic change of coordinates. 
In the first part of the section, we will exhibit hands-on examples on how to refine the parametrization from \Cref{thm} by adding constraints on the ideals such as the minimal number of generators or homogeneity. 
The second part is devoted to formalizing these procedures. We will revisit some classical results on admissible numbers of generators and on certain subschemes of the punctual Hilbert scheme $\Hilb^n(\res[\![x,y]\!])$, and give alternative proofs based on our canonical Hilbert-Burch matrices. This will assist us in reaching \Cref{th:sets_d_gens} and \Cref{prop:hom}, where we provide explicit descriptions of the affine and quasi-affine varieties forming the Betti strata and homogeneous cells, respectively.

\medskip

In what follows $h$ is an admissible Hilbert function for some zero-dimensional ideal in $R=\res[\![x,y]\!]$, namely $h=(1,2,\dots,t,h_t,\dots,h_s,0)$, with $t\geq h_t\geq \dots\geq h_s$, see \cite{Mac27}. 
Moreover, we will denote by $\Delta(h)$ the maximal jump in the Hilbert function $h$, that is $\Delta(h):=\max \lbrace \vert h_i-h_{i-1}\vert: 1\leq i\leq s\rbrace$. For an ideal \(J\) 
we denote by \(\mu(J)\) the minimal number of generators of \(J\). We will say that a minimal number of generators \(d\) is admissible for a Hilbert function \(h\) if there exists an ideal \(J\) with Hilbert function \(h\) and \(\mu(J)=d\).

Let $L=(x^t,x^{t-1}y^{m_1},\dots,y^{m_t})$ be the associated lex-segment ideal to the Hilbert function \(h\) and let us focus on special subsets of $V(L)$: 

\begin{definition} \label{def:BettiStrata}
For any admissible minimal number of generators $d$ of the Hilbert function $h$, we call $V_d(L)=\lbrace J\in V(L):\mu(J)=d\rbrace$ the $d$-\textbf{Betti stratum} of $V(L)$. 
The set \(V_2(L)\) is the subset of \(V(L)\) consisting of complete intersection ideals. Therefore, we will also refer to it as \(V_{\textrm{CI}}(L)\).

We denote the homogeneous sub-cell of $V(L)$ by $V_{\textrm{hom}}(L)=\lbrace J\in V(L): J=J^\ast\rbrace$.
\end{definition}
In \cite[Remark 4.7]{RS10}, Rossi and Sharifan give a procedure to obtain special instances of  ideals 
in the \(d\)-Betti stratum \(V_d(L)\). The authors considered very specific deformations of a Hilbert-Burch matrix \(H\) of the lex-segment ideal \(L\): whenever $u_{i+2,i}\leq 0$, the entry $n_{i+2,i}$ is set to \(1\) and all other entries are set to \(0\). Note that a matrix \(N\) obtained this way belongs to \(\T(L)\), hence the ideal \(I_t(H+N)\) is in  \(V(L)\).
This approach is a particular case of the fact that the minimal number of generators of $J\in V(L)$ decreases as the rank of the matrix $\bar{N}$ formed by the constant terms of $N\in \T(L)$ increases. We reproduce a tailored version of a result of Bertella \cite[Lemma 2.1]{Ber09} and its proof here for the sake of completeness.

\begin{lemma}\label{lem:rk} Let $L$ be a lex-segment ideal with canonical Hilbert-Burch matrix $H$ and consider $J\in V(L)$. Then the minimal number of generators $\mu(J)=t+1-\rk(\bar{N})$, where $N$ is the unique matrix in $\T(L)$ such that $J=I_t(H+N)$.
\end{lemma}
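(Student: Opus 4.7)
The plan is to extract $\mu(J)$ from the Hilbert–Burch resolution of $J$ by tensoring with the residue field $\res=R/\m$, and then to observe that all of the ``canonical'' part $H$ becomes zero after this reduction, so only $\bar{N}$ survives.

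First, I would recall that by \Cref{thm} the matrix $H+N$ is a Hilbert–Burch matrix for $J=I_t(H+N)$. Since $J$ has codimension $2$, the Hilbert–Burch theorem provides a free resolution
\begin{equation*}
0 \longrightarrow R^t \xrightarrow{\;H+N\;} R^{t+1} \xrightarrow{\;(f_1,\ldots,f_{t+1})\;} R \longrightarrow R/J \longrightarrow 0,
\end{equation*}
where $f_i=(-1)^{i+1}$ times the $t$-minor of $H+N$ obtained by deleting the $i$-th row. In particular $J$ is generated (not necessarily minimally) by $f_1,\ldots,f_{t+1}$.

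Next, apply $-\otimes_R \res$ to the right-exact portion $R^t\to R^{t+1}\to J\to 0$, obtaining
\begin{equation*}
\res^t \xrightarrow{\;\overline{H+N}\;} \res^{t+1} \longrightarrow J/\m J \longrightarrow 0,
\end{equation*}
so that $\mu(J)=\dim_\res J/\m J = (t+1) - \rk(\overline{H+N})$. The key observation is that every entry of $H$ lies in $\m$: the diagonal entries are $y^{d_i}$ with $d_i\ge 1$, the subdiagonal entries are $-x$, and all remaining entries are $0$. Hence $\overline{H}=0$ and $\overline{H+N}=\bar{N}$.

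It remains only to notice that the definition of $\bar{N}$ (constant terms of $N$) is consistent with the shape constraints of $\T(L)$: for $i\le j$ the entry $n_{i,j}$ is zero by \Cref{def:setT2}, and for $i>j$ a nonzero constant term can appear only when $u_{i,j}\le 0$, since otherwise $\ord(n_{i,j})\ge u_{i,j}>0$. Putting these pieces together yields $\mu(J) = t+1 - \rk(\bar{N})$. I do not anticipate a real obstacle here; the only subtlety is the need to justify that the Hilbert–Burch resolution is available for every $J\in V(L)$ (which follows from \Cref{thm} and the fact that all such $J$ are zero-dimensional, hence of codimension $2$ in $R$), so that taking $-\otimes_R \res$ at the generator step is legitimate.
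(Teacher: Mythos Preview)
Your argument is correct and follows essentially the same route as the paper: obtain the right-exact sequence $R^t\xrightarrow{H+N}R^{t+1}\to J\to 0$, tensor with $R/\m$, and observe that $H$ reduces to zero so that $\mu(J)=\dim_\res J/\m J=t+1-\rk(\bar{N})$. Your additional remarks on why $\overline{H}=0$ and on the shape of $\bar{N}$ are harmless elaborations of the paper's terse presentation.
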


\begin{proof} Since the columns of the matrix $H+N$ encode a system of generators of the syzygies of its maximal minors (that is, of the generators of the ideal $J$), the sequence $R^t\xrightarrow{H+N} R^{t+1}\longrightarrow J \longrightarrow 0$ is exact. Tensoring with $R/\m$ over $R$ yields 

$$(R/\m)^t\xrightarrow{\bar{N}} (R/\m)^{t+1}\longrightarrow J/\m J \longrightarrow 0,$$

\noindent
hence the minimal number of generators of $J$ is $\dim_{R/\m} J/\m J=t+1-\rk(\bar{N})$.
\end{proof}

In the following example, we will show how the parametrization of the Gr\"obner cell $V(L)$ via canonical Hilbert-Burch matrices generalizes the constructions of Rossi and Sharifan in \cite{RS10}.

 \begin{example} \label{ex:3.3}
 Consider the lex-segment ideal 
\(L = (x^4,x^3y, x^2y^5, xy^8, y^{10})\) with Hilbert function $h=(1,2,3,4,3,3,3,2,2,1)$, canonical Hilbert-Burch matrix $H$ and degree matrix $U$:

\[H=\left(
\begin{array}{c c c c}
y & 0 & 0 & 0\\
-x & y^4 & 0 & 0\\
0 & -x & y^3 & 0 \\
0 & 0 & -x & y^2 \\
0 & 0 & 0 & -x\\
\end{array} \right)
\hspace{1.5cm}
U =
\left(
\begin{array}{c c c c}
1  &   4 &    6 &    7\\
     1  &   4  &   6 &    7\\
    -2  &   1  &   3  &   4\\
    -4  &  -1  &   1   &  2\\
    -5  &  -2  &   0   &  1 \\
\end{array} \right).
\]

By \Cref{thm}, any \(J \in V(L)\)
is the ideal of maximal minors of the matrix 
\[
H+N = 
\left(
\begin{array}{c c c c}
y& 0&          0&      0\\  
-x&y^4&         0&       0\\ 
c_1& -x+c_2 y+c_3 y^2+c_4 y^3&y^3&     0\\ 
c_5& c_6+c_7y+c_8y^2+c_9y^3& -x+c_{10}y+c_{11}y^2&y^2\\
c_{12}& c_{13}+c_{14}y+c_{15}y^2+c_{16}y^3& c_{17}+c_{18}y+c_{19}y^2& -x+c_{20}y\\
\end{array} \right),
\]
for some coefficients \(c_1, \dots, c_{20} \in  \res\). In other words, $V(L)$ is a 20-dimensional affine space with origin the lex-segment ideal $L$. Any ideal of the form $J=I_4(H+N)$ can therefore be identified with the point with coordinates $(c_1,\dots,c_{20})$ in $\mathbb{A}^{20}$. 

Note that the coordinates $c_2,c_{10},c_{17},c_{20}$ play a special role: $J$ is homogeneous if and only if those are the only non-zero coordinates. Indeed, they are the coefficients of degree matching the corresponding entry of the degree matrix. Thus \(V_{\textrm{hom}}\) as defined in \Cref{def:BettiStrata} 
is the subset of \(V(L)\) where all other coordinates vanish. Let \(\mathcal{I} \subset \res[c_1,\dots,c_{20}]\) be an ideal, then \(\mathcal{V}(\mathcal{I}) \subset \mathbb{A}^{20}\) will denote the vanishing set of \(\mathcal{I}\). And we can formalize the previous statement to:

\[V_{\textrm{hom}}(L)=\mathcal{V}(c_1,c_3,c_4,c_5,c_6,c_7,c_8,c_9,c_{11},c_{12},c_{13},c_{14},c_{15},c_{16},c_{18},c_{19})\simeq \mathbb{A}^4.\]
Since a $\otau$-enhanced standard basis is in particular a standard basis, it offers a very simple procedure for computing the initial ideal $J^\ast$ for a non-homogeneous $J$: set all $c_i=0$ except for $i=2,10,17,20$, namely project onto $\mathbb{A}^4$. 

Let us revisit the specific deformation of \(L\) given in \cite[Example 4.8]{RS10}. The authors constructed the complete intersection
ideal \(J=(x^4-x^2y^2-x^2y^3-x^2y^4+y^6, x^3y-xy^3-xy^4) \in V(L)\) and calculated \(J^\ast\). 
Note that $J=I_4(H+N)$ with \(N \in \T(L)\) given by $c_1=c_6=c_{17}=1$ and 0's otherwise. Indeed, $J^\ast$ is obtained by keeping $c_{17}=1$ and setting 0's otherwise. 

Note that the third column of the canonical Hilbert-Burch matrix of $J$ allows us to express $f_4$ as a combination of $f_2,f_3$, the second column yields $f_3$ as a combination of $f_1$ and $f_2$ and the first one gives $f_2$ as a combination of $f_0$ and $f_1$. In fact, any ideal obtained from a matrix satisfying $c_1c_6c_{17}\neq 0$ is a complete intersection generated by $f_0$ and $f_1$. 

By \Cref{lem:rk}, this is actually the only way to obtain complete intersections in $V(L)$.
$V_{\textrm{CI}}(L)$ (as in \Cref{def:BettiStrata}) is a full-dimensional quasi-affine variety obtained by removing the union of the three hyperplanes $\lbrace c_1=0\rbrace$, $\lbrace c_6=0\rbrace$ and $\lbrace c_{17}=0\rbrace$ from $\mathbb{A}^{20}$. 

Similarly, the sets of ideals in $V(L)$ with exactly 5,\,4 and 3 generators can be described as follows:

$$V_5(L)=\mathcal{V}(c_1,c_5,c_6,c_{12},c_{13},c_{17})\simeq \mathbb{A}^{14},$$
$$V_4(L)=\mathcal{V}(c_1c_6,c_1c_{13},c_1c_{17},c_5c_{17},c_6c_{17},c_5c_{13}-c_6c_{12})\backslash V_5(L),$$
$$V_3(L)=\mathcal{V}(c_1c_6c_{17})\backslash \mathcal{V}(c_1c_6,c_1c_{13},c_1c_{17},c_5c_{17},c_6c_{17},c_5c_{13}-c_6c_{12}).$$

In particular, homogeneous ideals with Hilbert function $h$ will always have either 4 or 5 generators, since the only non-zero constant term of $H+N$ allowed in $V_{\textrm{hom}}(L)$ is $c_{17}$. 

We will now consider the closures of the \(d\)-Betti strata.
The stratum of the maximal admissible minimal number of generators \(V_5(L)\) is closed, of codimension \(6\) inside the \(20\)-dimensional \(V(L)\) and irreducible.
The closure of the \(4\)-Betti stratum 
has \(3\) irreducible components, all of them are reduced of codimension \(3\).
The closure \(\overline{V_3(L)}\)
has \(3\) irreducible components, each of codimension \(1\). 
The stratum of complete interesection ideals is dense in \(V(L)\), thus
\(\overline{V_{\textrm{CI}}(L)} = V(L)\).
\end{example}

Next we will display an example where $\Delta(h)>1$:

\begin{example}\label{ex:jump}
 Consider the lex-segment ideal 
\(L = (x^4,x^3y^2, x^2y^3, xy^5, y^7)\) with Hilbert function $h=(1,2,3,4,4,2,1)$ and degree matrix $U$:

\[U =
\left(
\begin{array}{cccc}
2  &   2  &   3   &  4  \\
1  &   1  &   2   &  3  \\
1  &   1  &   2   &  3 \\
0  &   0  &   1   &  2\\
-1  &  -1  &   0   &  1 \\
\end{array} \right)
\]

By \Cref{thm}, $V(L)\simeq\mathbb{A}^{12}$ and any ideal in \(V(L)\) is of the form 
\[
J = I_4
\left(
\begin{array}{cccc}
y^2& 0&          0&      0\\  
-x+c_1y &y &         0&       0\\ 
c_2y & -x & y^2 &     0\\ 
c_3+c_4y & c_5 & -x+c_6y & y^2\\
c_7+c_8y & c_9 & c_{10}+c_{11}y & -x+c_{12}y\\
\end{array} \right),
\]
for some coefficients \(c_1, \dots, c_{12} \in  \res\).

Note that $J$ is homogeneous if and only if, in each entry $(i,j)$ of the matrix above, coefficients of terms with degree other than $u_{i,j}$ vanish, namely $V_{\textrm{hom}}(L)=\mathcal{V}(c_4,c_7,c_8,c_9,c_{11})\simeq \mathbb{A}^7$.

We study the matrix of constant terms 
\begin{equation}\label{eq:Mbar}
 \bar{N} = 
\left(
\begin{array}{cccc}
0 & 0&          0&      0\\  
0 & 0 &         0&       0\\ 
0 & 0 & 0 &     0\\ 
c_3 & c_5 & 0 & 0\\
c_7 & c_9 & c_{10} & 0\\
\end{array} \right)   
\end{equation}

\noindent
 to determine the sets of ideals with a given admissible minimal number of generators:

$$V_5(L)=\mathcal{V}(c_3,c_5,c_7,c_9,c_{10})\simeq \mathbb{A}^7,$$
$$V_4(L)=\mathcal{V}(c_3c_{10},c_5c_{10},c_3c_9-c_5c_7)\backslash V_5(L),$$
$$V_3(L)=\mathbb{A}^{12}\backslash \mathcal{V}(c_3c_{10},c_5c_{10},c_3c_9-c_5c_7).$$
The stratum \(V_5(L)\) with the maximal admissible minimal number of generators is again closed, as in \Cref{ex:3.3}. It is of codimension \(5\) and irreducible.
The closure of \(V_4(L)\) has two irreducible components, both of codimension \(2\). The stratum \(V_3(L)\) with the minimal admissible minimal number of generators is dense in \(V(L)\).

In this example, we can obtain homogeneous ideals in each of the admissible minimal numbers of generators since $\bar{N}$ is projected into $V_{\textrm{hom}}(L)$ just by setting $c_7=c_9=0$, hence its rank can still take any value in $\{0,1,2\}$. For example, take any $J\in V_3(L)$, then its initial ideal

\[
J^\ast = I_4
\left(
\begin{array}{cccc}
y^2& 0&          0&      0\\  
-x+c_1y &y &         0&       0\\ 
c_2y & -x & y^2 &     0\\ 
c_3 & c_5 & -x+c_6y & y^2\\
0 & 0 & c_{10} & -x+c_{12}y\\
\end{array} \right)
\]

\noindent
is minimally generated by a quartic and two quintics and will have the graded resolution $$0\longrightarrow P(-6)\oplus P(-8)\longrightarrow P(-4)\oplus P^2(-5)\longrightarrow P \longrightarrow P/J^\ast \longrightarrow 0,$$

\noindent
which can be obtained from the resolution of $P/L$ by the only two admissible zero cancellations $(P(-6),P(-6))$ and $(P(-7),P(-7))$, in the notation of \cite{RS10}.
\end{example}

\medskip

As an intermediate step towards proving the main result on Betti strata and homogeneous cells, we will reprove some classical results using the parametrization of \Cref{thm}. We will start with the well-known numerical condition on Hilbert functions that admit complete intersections, that is, there exists a complete intersection ideal with that Hilbert function. This is a result from Macaulay in \cite{Mac27}, which has been reproved several times with different techniques in \cite{Bri77,Iar77,RS10}. 

From now on we will assume that $\res$ is a field of characteristic 0.
 This assumption is necessary whenever we want to translate results on $V(\Lex(h))$ to results on \(\Hilb^h(\res[\![x,y]\!])\), or from $V_{\mathrm{hom}}(\Lex(h))$ to results on \(\Hilb^h_{\mathrm{hom}}(\res[\![x,y]\!])\). The computation of dimensions and strata in the Gr\"obner cells themselves hold for any arbitrary field $\res$.

\begin{proposition}\label{prop:CI}[Hilbert functions that admit a complete intersection]
 A Hilbert function $h$ admits a complete intersection if and only if $\Delta(h)=1$. 
\end{proposition}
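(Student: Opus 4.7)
The plan is to combine \Cref{thm} with \Cref{lem:rk}. In characteristic $0$, every ideal $J$ with Hilbert function $h$ can be brought, via a generic change of coordinates, into the Gr\"obner cell $V(L)$ of $L = \Lex(h)$; since being a complete intersection is preserved under coordinate change, $h$ admits a complete intersection if and only if $V_{\textrm{CI}}(L) \neq \emptyset$, which by \Cref{lem:rk} amounts to finding $N \in \T(L)$ with $\rk(\bar N) = t - 1$.

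The first step is to rephrase $\Delta(h) = 1$ in terms of the differences $d_j = m_j - m_{j-1}$ of the lex-segment $L = (x^t, x^{t-1}y^{m_1}, \ldots, y^{m_t})$. Decomposing $\HF_{R/L}$ via its staircase columns, the $j$-th column contributes $1$ to $h_k$ exactly for $t - j \leq k < k_j$, where $k_j := t - j + m_j$. Hence $h_k - h_{k-1} = +1$ for $1 \leq k \leq t - 1$ and $h_k - h_{k-1} = -\#\{j : k_j = k\}$ for $k \geq t$. Since $k_j - k_{j-1} = d_j - 1$, the $k_j$'s are pairwise distinct if and only if $d_j \geq 2$ for every $j \in \{2, \ldots, t\}$, which is precisely the condition $\Delta(h) = 1$.

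For the implication $\Delta(h) = 1 \Longrightarrow V_{\textrm{CI}}(L) \neq \emptyset$, I would construct an explicit $N$: since $u_{j+2, j} = 2 - d_{j+1} \leq 0$ for all $j = 1, \ldots, t - 1$, the positions $(j+2, j)$ admit a non-zero constant in $\T(L)$, so set $n_{j+2, j} = 1$ there and all other entries $0$. The constant-term matrix $\bar N$ then contains a $(t-1) \times (t-1)$ identity submatrix on rows $3, \ldots, t+1$ and columns $1, \ldots, t-1$, so $\rk(\bar N) = t - 1$ and $I_t(H + N)$ is a complete intersection with Hilbert function $h$.

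The harder direction is the converse. Assume $\Delta(h) > 1$, and pick $r \in \{2, \ldots, t\}$ with $d_r = 1$. I claim that for every $N \in \T(L)$, the columns $r - 1, r, \ldots, t$ of $\bar N$ vanish in all rows $i \leq r + 1$: for $j \geq r$ this is forced by the structural constraint $i \geq j + 2 \geq r + 2$, while for $j = r - 1$ the only new candidate is the position $(r+1, r-1)$, where $u_{r+1, r-1} = 2 - d_r = 1 > 0$ forbids a non-zero constant. Hence these $t - r + 2$ columns span a subspace of dimension at most $t - r$, while the remaining $r - 2$ columns contribute dimension at most $r - 2$; therefore $\rk(\bar N) \leq (t - r) + (r - 2) = t - 2 < t - 1$. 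Consequently $\mu(I_t(H + N)) \geq 3$ for every $N \in \T(L)$, so $V_{\textrm{CI}}(L) = \emptyset$ and no complete intersection can have Hilbert function $h$.
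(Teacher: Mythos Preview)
Your proof is correct and follows the same overall strategy as the paper: reduce via \Cref{thm} and \Cref{lem:rk} to whether $\rk(\bar N)$ can reach $t-1$, and identify $\Delta(h)=1$ with the condition $d_j\geq 2$ for all $j\geq 2$ (equivalently $u_{i,i-2}\leq 0$ for all $i$). Your treatment of the converse---bounding $\rk(\bar N)\leq t-2$ by splitting the columns of $\bar N$ at an index $r$ with $d_r=1$---is more explicit than the paper's, which instead invokes the monotonicity of the degree matrix $U$ to argue that some $u_{i,i-2}=1$ obstructs rank $t-1$.
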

\begin{proof}
Combining \Cref{lem:rk} and the fact that $\Lex(h)$ is the generic initial ideal, $h$ admits a complete intersection if and only if $\rk\bar{N}$ can reach $t-1$ for some $J=I_t(H+N)$ in $V(\Lex(h))$, where $N \in \T(\Lex(h))$. 
This can only occur if $u_{i,i-2}\leq 0$ for all $i=3,\dots,t+1$.
Since $0<m_1<\dots <m_t$ in the lex-segment case, $u_{i,j}\leq 1$ for any $j\leq i-1$ and it is decreasing in the sense that $u_{i,j}\leq u_{i-1,j},u_{i-1,j+1},u_{i,j+1}$.   
More precisely, $u_{i,j}=1$ iff $u_{i-1,j}=u_{i-1,j+1}=u_{i,j+1}=1$ for $j\leq i-2$. From the fact that $d_k=1$ for $k\geq 2$ if and only if there is a jump in $h$ of height at least 2, we see that $u_{i,i-2}\leq 0$ for all $i=3,\dots,t$ if and only if $\Delta(h)=1$.
\end{proof}

The main point in the proof of \Cref{prop:CI} is to show that for a Hilbert function \(h\) with maximal jump \(\Delta(h)=1\) 
there exists \(N \in \T(\Lex(h))\) with 
\(\rk\bar{N} = t-1\). 
Similarly, for Hilbert functions \(h\) with \(\Delta(h)>1\) we can 
investigate what is the maximal rank of \(\bar{N}\) that can be achieved.

\begin{example}
    Let us consider a Hilbert function $h=(1,2,3,4,5,6,7,8,\dots)$ which starts decreasing at $t=8$ and has maximal jump $\Delta(h)=3$. We want to know which are the possible ranks of a matrix $\bar{N}$ formed by constant terms of $N\in\mathcal{M}(\Lex(h))$. We claim that this only depends on the size of the maximal jump and, more precisely, $0\leq \rk{\bar{N}}\leq 5=8-3$.
    
    To illustrate this, we will consider the following extra assumptions: $h$ has a single jump by 3 (reflected by the fact that there are only two consecutive 1's in the main diagonal of the degree matrix $U$, namely $d_5=d_6=1$) and a single jump by 2 (namely $d_3=1$).
\begin{figure}[h]
\begin{scriptsize}
\[
U=\left(\begin{array}{cccccccc}
    d_1& & & & & & &\\
    1 & d_2 & & & & & &\\
   \ast & 1 & 1 & & & & &\\
   \ast  & 1 & 1 & d_4 & & & &\\
   \ast  &\ast & \ast & \mathbf{1} & \mathbf{1} & \mathbf{1} & & \\
   \ast  &\ast  & \ast  & \mathbf{1} & \mathbf{1} & \mathbf{1} & & \\
    \ast &\ast & \ast & \mathbf{1} & \mathbf{1} & \mathbf{1} & d_7 & \\
   \ast  & \ast & \ast & \ast & \ast  &\ast & 1 & d_8\\
    \ast & \ast & \ast &\ast &\ast  &\ast & \ast & 1\\
    \end{array}\right)
\quad
\bar{N}=
\left(\begin{array}{cccccccc}
    0 & 0 & 0 & 0 & 0 & 0  &  0 & 0\\
    0 & 0 & 0 & 0 & 0 & 0  &  0 & 0\\
    \ast & 0 & 0 & 0 & 0 & 0  &  0 & 0\\
    \ast & 0 & 0 & 0 & 0 & 0 & 0 & 0\\
    \fbox{$\ast$} & \ast & \ast & 0 & 0 & 0 & 0 & 0 \\
    \ast & \fbox{$\ast$} & \ast & 0 & 0 & 0 & 0 & 0 \\
    \ast & \ast  &   \fbox{$\ast$} & \mathbf{0} & 0 & 0 & 0 & 0 \\
    \ast & \ast & \ast &   \fbox{$\ast$} & \ast & \ast & 0 & 0 \\
    \ast & \ast & \ast & \ast &   \fbox{$\ast$} & \ast & \ast & 0 \\
\end{array}\right)
\]
\caption{Degree matrix $U$ and matrix of constant terms $\bar{N}$.}
\label{fig:N}
\end{scriptsize}
\end{figure}

The asteriscs below the diagonal in $U$ represent entries such that $u_{i,j}\leq 0$, hence the corresponding entries in $\bar{N}$ are allowed to be non-zero. Note that only the maximal size of a square of 1's (here 3) determines which is the longest main diagonal of non-zeroes (boxed asteriscs) and hence $\rk(\bar{N})\leq 5$. By considering a matrix $N$ with (possibly zero) constants in the boxed entries and zeroes elsewhere, we can obtain ideals $J=I_8(H+N)$ with Hilbert function $h$ and $4\leq \mu(J)\leq 9$.
\end{example}

This argument is used in the following proposition on the minimal number of generators of an ideal with a given Hilbert function, reproving the results in \cite[Theorem~2.3,\ Theorem~2.4]{Ber09}:

\begin{proposition}[Admissible number of generators of an ideal with a given Hilbert function]
Given $h$, any ideal $J$ such that $\HF_{R/J}=h$ satisfies $\Delta(h)<\mu(J)\leq t+1$, and we can find ideals with any admissible number of generators.
\end{proposition}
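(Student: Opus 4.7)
The plan is to reduce to the case $J \in V(L)$ for $L = \Lex(h)$ and then analyze the constant-term matrix $\bar{N}$ via \Cref{lem:rk}. Since $\Lex(h)$ is the generic initial ideal with respect to $\otau$ in characteristic zero, a generic change of coordinates brings any ideal with Hilbert function $h$ into $V(L)$ without changing $\mu$. By \Cref{thm} such an ideal has the form $J = I_t(H + N)$ for a unique $N \in \T(L)$, and then $\mu(J) = t + 1 - \rk(\bar{N})$. The upper bound $\mu(J) \leq t + 1$ is immediate from $\rk(\bar{N}) \geq 0$, attained by $L$ itself (i.e.\ $N = 0$).

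For the lower bound $\mu(J) > \Delta(h)$ we show $\rk(\bar{N}) \leq t - \Delta$, where $\Delta := \Delta(h)$. First, extending the combinatorial observation from the proof of \Cref{prop:CI}, a maximal jump of size $\Delta \geq 2$ in $h$ corresponds to an index $k \geq 2$ with $\Delta - 1$ consecutive unit entries $d_k = d_{k+1} = \cdots = d_{k + \Delta - 2} = 1$ (this is the pattern already exhibited in the example preceding \Cref{prop:CI}). Using the recursions $u_{i, j+1} = u_{i, j} + d_{j+1} - 1$ and $u_{i+1, j} = u_{i, j} + d_i - 1$ together with the subdiagonal identity $u_{i+1, i} = 1$, these unit values propagate to a $\Delta \times \Delta$ block of ones in $U$ at rows $\{k, \ldots, k + \Delta - 1\}$ and columns $\{k - 1, \ldots, k + \Delta - 2\}$. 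Since $\bar{n}_{i,j}$ must vanish whenever $i \leq j + 1$ or $u_{i,j} > 0$, a case check (splitting on $j \geq i$, $j = i - 1$, and $j \leq i - 2$) gives $\bar{N}_{i, j} = 0$ for all $(i, j)$ with $i \leq k + \Delta - 1$ and $j \geq k - 1$. So $\bar{N}$ takes block-triangular form
\[
\bar{N} = \begin{pmatrix} \bar{N}_{A, C} & 0 \\ \bar{N}_{B, C} & \bar{N}_{B, D} \end{pmatrix},
\]
with $A = \{1, \ldots, k + \Delta - 1\}$, $B = \{k + \Delta, \ldots, t + 1\}$, $C = \{1, \ldots, k - 2\}$, $D = \{k - 1, \ldots, t\}$, giving $\rk(\bar{N}) \leq |C| + |B| = (k - 2) + (t - k - \Delta + 2) = t - \Delta$. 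The edge case $\Delta = 1$ is immediate because rows $1$ and $2$ of $\bar{N}$ vanish identically (since $u_{2, 1} = 1$).

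For realizability of any admissible $d \in \{\Delta + 1, \ldots, t + 1\}$, I would pick a ``staircase'' of $t - \Delta$ positions in $\bar{N}$ on distinct rows and columns, all lying in the region where $u_{i,j} \leq 0$, following the pattern of \Cref{ex:jump}. Placing non-zero constants at the first $t + 1 - d$ of these positions (and zero elsewhere) yields $\bar{N}$ of rank exactly $t + 1 - d$, hence an ideal $I_t(H + N)$ with $\mu = d$. The main obstacle is combinatorial: verifying both that the $\Delta - 1$ consecutive unit $d_i$'s really do propagate to the claimed $\Delta \times \Delta$ block of ones in $U$ (and hence to the zero block in $\bar{N}$), and that a valid staircase of $t - \Delta$ realizing positions exists for every admissible Hilbert function $h$.
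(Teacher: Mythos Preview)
Your proposal is correct and follows essentially the same approach as the paper: reduce to $V(\Lex(h))$ via the generic initial ideal, apply \Cref{thm} and \Cref{lem:rk} to translate the question into bounding $\rk(\bar N)$, identify the $\Delta\times\Delta$ block of ones in $U$ coming from $\Delta-1$ consecutive $d_i=1$, and realize intermediate ranks by placing constants along a length-$(t-\Delta)$ transversal. Your block-triangular formulation of the upper bound is in fact a cleaner justification than the paper's ``maximal diagonal'' sketch, and the combinatorial verifications you flag as remaining obstacles are exactly the points the paper also leaves to the reader.
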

\begin{proof}

Combining \Cref{thm} and \Cref{lem:rk}, finding ideals \(J\) with $\Delta(h)<\mu(J)\leq t+1$ is equivalent to proving that 
for \(N \in \T(L)\), with \(L=\Lex(h)\), the rank of \(\bar{N}\) is in the range
$0\leq\rk{\bar{N}}\leq t-\Delta(h)$ and that 
for all the values \(l\) in this range there exists an \(N \in \T(L)\) such 
that \(\rk{\bar{N}}=l\).

In the proof of \Cref{prop:CI} we saw that $u_{i,j}=1$ iff $u_{i-1,j}=u_{i-1,j+1}=u_{i,j+1}=1$ for $j\leq i-2$ and $d_k=1$ for $k\geq 2$ iff $\Delta(h)>1$. Therefore, $\Delta(h)$ is the size of the maximal square of 1's that appears in the degree matrix $U$, see \Cref{fig:N}. It can be checked that it is precisely this size what gives an upper bound the maximal rank of $\bar{N}$: the maximal diagonal that admits non-zero entries has exactly $t-\Delta(h)$ entries (see boxed entries in \Cref{fig:N}). By setting the entries in this diagonal to zero or non-zero values, we obtain matrices with the desired ranks.
\end{proof}
We can also use the parametrization of Gr\"obner cells of lex-segment ideals to recover classical results on the dimension of special subschemes of the punctual Hilbert scheme $\Hilb^n(\res[\![x,y]\!])$. 
The dimension of the subscheme \(\Hilb^h(\res[\![x,y]\!])\) of $\Hilb^n(\res[\![x,y]\!])$ parametrizing ideals with a given Hilbert function was already given in \cite[Theorem III.3.1]{Bri77} and \cite[Theorem~2.12]{Iar77}, where \(\Hilb^h(\res[\![x,y]\!])\) was denoted by \(S(H^{\alpha})\) and \(Z_T\), respectively.

\begin{proposition}[Dimension of the subscheme of the punctual Hilbert scheme with a given Hilbert function] \label{prop:dimCell}
The dimension of the subscheme of \(\Hilb^n(\res[\![x,y]\!])\) corresponding to a certain Hilbert function $h$ is 
\begin{equation}\label{eq:dim}
 \dim (\Hilb^h(\res[\![x,y]\!]))=n-t-\sum_{l\geq 2}n_l\left(\begin{array}{c} l\\2\end{array}\right),   
\end{equation}
\noindent
where $n_l$ denotes the number of jumps of height $l\geq 2$ in $h$. 
\end{proposition}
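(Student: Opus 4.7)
My plan is to compute $\dim V(L)$ for $L=\Lex(h)$ directly from the parametrization of \Cref{thm}, and then to invoke the characteristic-zero statement that $V(L)$ captures $\Hilb^h(\res[\![x,y]\!])$ up to generic change of coordinates, so that the two dimensions agree. By the bijection $\Phi_L\colon \T(L)\to V(L)$, the task reduces to computing the dimension of the coordinate affine space $\T(L)$ parametrizing the admissible entries of $N$.

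By \Cref{def:setT2}, each $(i,j)$-entry with $i>j$ is a polynomial in $\res[y]$ whose admissible degrees fill the window $[\max(u_{i,j},0),\,d_j-1]$, contributing $d_j-\max(u_{i,j},0)$ free coefficients. Summing over strictly lower-triangular entries, and using that $n=\sum_{i=1}^{t}m_i$ (the $R/L$-basis is $\{x^a y^b : a<t,\ b<m_{t-a}\}$) together with the elementary identity $\sum_{j=1}^{t}(t+1-j)\,d_j=\sum_{i=1}^{t} m_i$, one obtains
\[
\dim \T(L)\;=\; n\;-\;\sum_{1\le j<i\le t+1} \max(u_{i,j},0).
\]
It thus suffices to prove $\sum_{i>j}\max(u_{i,j},0)=t+\sum_{l\ge 2} n_l\binom{l}{2}$.

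The central observation is the rewriting $u_{i,j}=1-\sum_{k=j+1}^{i-1}(d_k-1)$, which gives $u_{i,j}\le 1$ and more precisely $\max(u_{i,j},0)\in\{0,1\}$, with value $1$ exactly when $d_{j+1}=\cdots=d_{i-1}=1$ (the range being empty for $i=j+1$, so the subdiagonal pairs always contribute $1$, accounting for the additive $t$). The remaining contributing pairs ($i\ge j+2$) are in bijection with non-empty sub-intervals of maximal runs of $1$'s inside $(d_2,\dots,d_t)$; a maximal run of length $r$ thus contributes $\binom{r+1}{2}$ such sub-intervals.

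The final and most delicate step, which I anticipate to be the main obstacle, is the combinatorial translation from runs of $1$'s in $(d_k)_k$ to jumps in the Hilbert function $h$. For this I would use the non-decreasing function $f(i):=m_i-i=\sum_{k\le i}(d_k-1)$: a direct anti-diagonal count of the staircase of $L$ shows $h_{j-1}-h_j=\#\{i:f(i)=j-t\}$ for $j\ge t$, so jumps of $h$ of height $l$ correspond bijectively to flats of $f$ of length $l$, meaning $n_l$ equals the number of length-$l$ flats. Since a flat of $f$ of length $L\ge 2$ corresponds bijectively to a maximal run of length $L-1$ of consecutive $1$'s in $(d_2,\dots,d_t)$, and since flats of length $1$ contribute $\binom{1}{2}=0$, one obtains $\sum_{\text{runs}}\binom{r+1}{2}=\sum_{l\ge 2} n_l\binom{l}{2}$, completing the proof.
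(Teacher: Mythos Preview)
The proposal is correct and follows essentially the same approach as the paper: both compute $\dim V(L)$ from the parametrization in \Cref{thm}, rewrite it as $n$ minus the count of positive $u_{i,j}$ in the strictly lower triangle, split off the subdiagonal contribution $t$, and then count the remaining entries with $u_{i,j}=1$ by matching runs of $1$'s in $(d_2,\dots,d_t)$ with jumps of height $\ge 2$ in $h$. Your translation via the auxiliary function $f(i)=m_i-i$ and its flats is more explicit than the paper's terse appeal to ``disjoint lower triangular blocks of size $l-1$'', but the content is the same (a run of length $l-1$ has $\binom{l}{2}$ non-empty sub-intervals, matching the $\binom{l}{2}$ entries of a triangular block). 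One small imprecision worth fixing: in your identity $h_{j-1}-h_j=\#\{i:f(i)=j-t\}$ the index $i$ must range over $\{1,\dots,t\}$ and not include $i=0$ (where $f(0)=0$); otherwise the count at $j=t$ is off by one whenever $d_1=1$, as one checks already in \Cref{ex:3.3}. With that range made explicit, your flats-of-$f$ argument goes through and the bijection with maximal runs in $(d_2,\dots,d_t)$ is exactly as you state.
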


\begin{proof}
Consider the lex-segment ideal $L$ associated to the Hilbert function $h$. It is immediate from the parametrization of $V(L)$ in \Cref{thm} that $\dim V(L)=\sum_{2\leq j+1\leq i\leq t+1}\left(d_j-\max(u_{i,j},0)\right)$. This expression can be rewritten as $n-t-\#\lbrace u_{i,j}: u_{i,j}=1 \mbox{ for }j\leq i-2 \rbrace$. Using a similar argument as in the proof of \Cref{prop:CI},
1's below the second main diagonal arise only when there is a jump of height $l\geq 2$ in the Hilbert function and they occur in $U$ in disjoint lower triangular blocks of size $l-1$. Therefore,
we derive that 
    the cardinality of the set $\lbrace u_{i,j}: u_{i,j}=1 \mbox{ for }j\leq i-2 \rbrace$ is exactly the sum of the number of jumps of height $l\geq 2$ times $l(l-1)/2$. 
\end{proof}

We reprove the result on the dimension of the subscheme \(\Hilb^h_{\textrm{hom}}(\res[\![x,y]\!])\) that parameterizes homogeneous ideals with a given Hilbert function presented in \cite[Theorem~2.12]{Iar77} and \cite[Corollory~3.1]{CV08}, where \(\Hilb^h_{\textrm{hom}}(\res[\![x,y]\!])\) was denoted by \(G_T\) and \(\mathbb{G}(h)\) respectively:

\begin{proposition}[Dimension of the subscheme of homogeneous ideals with a given Hilbert function]  \label{prop:dimhom}
The dimension of the subscheme of \(\Hilb^h(\res[\![x,y]\!])\) parametrizing homogeneous ideals is 
\begin{equation}\label{eq:dimhom}
\dim(\Hilb^h_{\textrm{hom}}(\res[\![x,y]\!]))=t+\sum_{i=t-1}^s (h_{i-1}-h_i)(h_i-h_{i+1}).
\end{equation}
\end{proposition}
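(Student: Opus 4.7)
The plan is to parametrize the homogeneous sub-cell $V_{\textrm{hom}}(L)$ via the bijection $\Phi_L$ of \Cref{thm}, identify it as an affine space, count its dimension, and then translate the count into the stated formula. As in the reduction used in \Cref{prop:dimCell}, the characteristic zero hypothesis together with \cite[Corollary 5.10]{HW21} yields $\dim \Hilb^h_{\textrm{hom}}(\res[\![x,y]\!]) = \dim V_{\textrm{hom}}(L)$, so it suffices to compute the right-hand side.

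Following the recipe from \Cref{ex:3.3} and \Cref{ex:jump}, an ideal $J = I_t(H+N)$ is homogeneous if and only if every nonzero entry of $N$ has the same (graded) degree $u_{i,j}$ as the corresponding entry of $H$. Since the entries of $N$ lie in $\res[y]$, this forces each $n_{i,j}$ to be either zero or a constant multiple of $y^{u_{i,j}}$, subject to $u_{i,j}\geq 0$ and the $\T(L)$-condition $u_{i,j}<d_j$. Hence $V_{\textrm{hom}}(L)$ is an affine space of dimension equal to the number of pairs $(i,j)$ with $1\leq j<i\leq t+1$ and $0\leq u_{i,j}<d_j$.

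To match this count with the Hilbert function, I would switch to generator-degree coordinates. Setting $D_k:=t-k+m_k$, the degree of the $k$-th minimal generator of $L$, one has $u_{i,j}=D_j-D_{i-1}+1$ and $d_j=D_j-D_{j-1}+1$, so the admissibility condition becomes $D_{j-1}<D_{i-1}\leq D_j+1$. Since $D_\bullet$ is non-decreasing and $i-1\geq j$, the possible values are $D_{i-1}\in\{D_j,D_j+1\}$, with $D_j$ excluded exactly when $d_j=1$. Writing $\mu_d:=\#\{k:D_k=d\}$ and grouping the pairs $(i,j)$ by the value $D_j$, a direct enumeration yields
\[
\dim V_{\textrm{hom}}(L)\;=\; t+1-\mu_t-\mu_{t+1}+\sum_{d\geq t}\mu_d\,\mu_{d+1}.
\]
Finally, the Hilbert-series identity $(1-z)H(P/L,z)=\sum_{d=0}^{t-1}z^d-\sum_{j=1}^{t}z^{D_j}$, immediate from the Hilbert--Burch resolution of $L$, gives $\mu_d=h_{d-1}-h_d$ for $d>t$ and $\mu_t=1+(h_{t-1}-h_t)$. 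Substituting these and simplifying collapses the right-hand side to $h_t+\sum_{d\geq t}(h_{d-1}-h_d)(h_d-h_{d+1})$, which equals $t+\sum_{i=t-1}^s(h_{i-1}-h_i)(h_i-h_{i+1})$ after absorbing the single negative term $(h_{t-2}-h_{t-1})(h_{t-1}-h_t)=-(t-h_t)$ into the leading $t$.

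The main obstacle is precisely this last piece of bookkeeping at the boundary $d=t$: the generator $x^t$ contributes the extra ``$+1$'' in $\mu_t$ independently of $h$, and the term at $i=t-1$ in the stated sum evaluates to the negative number $h_t-t$. Routing the count through the intermediate $\mu_d$-expression above makes both sides visibly equal to $h_t+\sum_{d\geq t}(h_{d-1}-h_d)(h_d-h_{d+1})$ and turns the reconciliation into a one-line computation.
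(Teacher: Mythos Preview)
Your argument is correct but follows a genuinely different route from the paper's. Both proofs begin identically, recognising $V_{\mathrm{hom}}(L)$ as the affine space indexed by pairs $(i,j)$ with $j<i$ and $0\le u_{i,j}<d_j$. From there the paper splits the count into ``zeros'' and ``admissible ones'' in the degree matrix~$U$: it observes that the $0$'s occur in rectangular blocks of size $(h_{i-1}-h_i)\times(h_i-h_{i+1})$ whenever two consecutive jumps meet, and that the admissible $1$'s number exactly $t+(h_{t-2}-h_{t-1})(h_{t-1}-h_t)$, giving the formula by direct inspection of the pattern of entries. Your approach instead linearises the problem via the generator degrees $D_k=t-k+m_k$, turning $0\le u_{i,j}<d_j$ into the clean inequality $D_{j-1}<D_{i-1}\le D_j+1$, and then enumerates pairs through the level-set multiplicities $\mu_d$ and the Hilbert-series identity for $P/L$. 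The paper's method is more visual and stays inside the degree-matrix combinatorics used throughout the section; your method is more algebraic, makes the link to the graded Betti data of $L$ explicit, and packages the boundary correction at $d=t$ into the single term $\mu_t=1+(h_{t-1}-h_t)$. Your intermediate identity $\dim V_{\mathrm{hom}}(L)=t+1-\mu_t-\mu_{t+1}+\sum_{d\ge t}\mu_d\mu_{d+1}$ is a pleasant by-product not visible in the paper's argument. The one place where your write-up is thinner than the paper's is the justification that $J=I_t(H+N)$ is homogeneous iff each $n_{i,j}$ is a scalar multiple of $y^{u_{i,j}}$; the paper later isolates this as \Cref{prop:hom}, using the injectivity of $\Phi_L$ to pass from $J=J^\ast$ to $N=N^\ast$, and you should invoke that injectivity explicitly rather than only pointing to the examples.
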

\begin{proof}
By \Cref{thm}, the dimension of the affine space $V_{\hom}(L)$ is the sum of the number of 0's in the degree matrix $U$ and the number of 1's in the strictly lower triangle of $U$ for which their corresponding diagonal entry is strictly higher than 1.
 More precisely, $\dim V_{\hom}(L)=\#\lbrace u_{i,j}: u_{i,j}=0 \mbox{ for }j\leq i-2\rbrace+\#\lbrace u_{i,j}: u_{i,j}=1 \mbox{ for }j\leq i-1 \mbox{ such that }d_j\neq 1\rbrace$.

We start by counting 0's in the degree matrix. For any $3\leq i\leq t+1$, $u_{i,i-2}=0$ if and only if $d_{i-1}=2$. Note that $d_k=2$, for $k\geq 2$, is the indicator that there are two consecutive jumps of height $l_1\geq 1$ and $l_2\geq 1$, respectively, in the Hilbert function (after position $t$). This yields a rectangle of 0's of size $l_1\times l_2$ and hence the number of 0's is the sum of the area of all such rectangles. Therefore,
$$\#\lbrace u_{i,j}: u_{i,j}=0 \mbox{ for }j\leq i-2\rbrace=\sum_{i=t}^s(h_{i-1}-h_i)(h_i-h_{i+1}).$$

Let us now count the number of admissible 1's. If $d_1\neq 1$, there are $t$ such 1's. Indeed, in the case $\Delta(h)=1$, the only 1's are in the third main diagonal and all of them are admissible. Alternatively, if $\Delta(h)>1$, then we will have squares of 1's in the degree matrix whose edge lengths are the size of the jumps, but the admissible ones are only those in the first column of each square, so we still obtain $t$ 1's. 

However, if $d_1=1$, we have to remove the 1's that might be in the first column of $U$. These 1's occur if and only if $h_t<t$ and there will be $t-h_t$ many such 1's. Since $h_{t-2}-h_{t-1}=(t-1)-t=-1$, the number of admissible ones in $U$ is exactly $t+(h_{t-2}-h_{t-1})(h_{t-1}-h_t)=t-(t-h_t)$ for any value of $d_1$. Adding the number of 0's to this quantity completes the proof.
\end{proof}

We are now in the position of stating the parametrization of the Betti strata of $\Hilb^h(\res[\![x,y]\!])$ up to generic change of coordinates. 
Recall that, for the lex-segment ideal \(L\) associated to \(h\), the Gr\"obner cell \(V(L)\) is an affine space of dimension \(D:= n-t-\sum_{l\geq 2}n_l {l\choose 2}\). For the sake of clarity in the notation of the following theorem, $I_d(\bar{N})$ is an ideal in $\res[c_1,..,c_D]$, where $\bar{N}$ is the matrix obtained by taking only the constant terms of a matrix $N\in \T(L)$ in general form. See \eqref{eq:Mbar} for an example of $\bar{N}$.

\begin{theorem}[Betti strata]\label{th:sets_d_gens}
Consider a Hilbert function $h$ and its associated lex-segment ideal $L$. 
For any $\Delta(h) +1\leq d\leq t+1$, the $d$-Betti stratum of the $D$-dimensional affine space $V(L)$, 
is the quasi-affine variety
$$V_d(L)=\mathcal{V}(I_{t+2-d}(\bar{N})) \backslash \mathcal{V}(I_{t+1-d}(\bar{N}))\subset V(L)\simeq \mathbb{A}^D.$$
\noindent
In particular, $V_{t+1}(L)$ is an affine space and 
$$V_{\Delta(h)+1}(L)=\mathbb{A}^D\backslash
\mathcal{V}(I_{t-\Delta(h)}(\bar{N}))$$
\noindent
is a \(D\)-dimensional quasi-affine variety.
\end{theorem}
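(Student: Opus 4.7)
The plan is to combine Theorem~\ref{thm} with Lemma~\ref{lem:rk} and the standard characterisation of the rank stratification of matrices via ideals of minors. By Theorem~\ref{thm}, the map $\Phi_L$ identifies $V(L)$ with $\T(L)\simeq\mathbb{A}^D$, so I would pick coordinates $c_1,\dots,c_D$ on $\mathbb{A}^D$ as the free coefficients of the entries of the generic matrix $N\in\T(L)$; in particular the entries of $\bar N$ are linear forms in the $c_i$. Lemma~\ref{lem:rk} then says that a point $J=\Phi_L(N)\in V(L)$ lies in the $d$-Betti stratum iff $\rk(\bar N)=t+1-d$. Using the classical fact that $\rk(M)<k$ iff $M\in\mathcal{V}(I_k(M))$, this becomes $\bar N\in\mathcal{V}(I_{r+1}(\bar N))\setminus\mathcal{V}(I_{r}(\bar N))$ for $r=t+1-d$, which is precisely the stated description.

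For the two extreme cases I would argue as follows. When $d=t+1$, we need $\rk(\bar N)=0$, i.e.\ $\bar N=0$; with the convention $\mathcal{V}(I_0(\bar N))=\emptyset$, $V_{t+1}(L)=\mathcal{V}(I_1(\bar N))$ is cut out by the linear equations that set the (finitely many) constant-term coordinates of $N$ to zero, hence it is an affine subspace of $\mathbb{A}^D$. When $d=\Delta(h)+1$, the preceding proposition on admissible numbers of generators gives that the maximal rank of $\bar N$ over all $N\in\T(L)$ is $t-\Delta(h)$; consequently every $(t-\Delta(h)+1)\times(t-\Delta(h)+1)$-minor of $\bar N$ vanishes identically as a polynomial in $c_1,\dots,c_D$. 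Therefore $\mathcal{V}(I_{t-\Delta(h)+1}(\bar N))=\mathbb{A}^D$ and $V_{\Delta(h)+1}(L)=\mathbb{A}^D\setminus\mathcal{V}(I_{t-\Delta(h)}(\bar N))$, a dense open (hence $D$-dimensional) quasi-affine subvariety of $V(L)$.

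The main obstacle is the identical vanishing of minors in the second extreme case: we must verify it holds as an equality of polynomials in the $c_i$, not merely pointwise. This reduces to the combinatorial description of the support of $\bar N$ already used in the proof of the previous proposition: the constant term of $n_{i,j}$ is free only when $u_{i,j}\leq 0$, and the degree matrix $U$ forces the nonzero entries of $\bar N$ to lie below a staircase whose longest south-west diagonal has exactly $t-\Delta(h)$ entries (the boxed entries in the example preceding the theorem). A direct Laplace-expansion argument on this staircase pattern then shows that any $(t-\Delta(h)+1)\times(t-\Delta(h)+1)$ submatrix of $\bar N$ has a row or column of zeros, so its determinant vanishes as a polynomial in the $c_i$. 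Together with the two previous paragraphs this completes the proof.
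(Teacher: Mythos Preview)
Your argument is essentially the one the paper has in mind: the theorem is stated without a separate proof precisely because it is meant to follow immediately from Theorem~\ref{thm}, Lemma~\ref{lem:rk}, and the preceding proposition on admissible numbers of generators, exactly as you combine them.

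One remark on your last paragraph. The ``main obstacle'' you raise is not actually an obstacle: the equality $\mathcal{V}(I_{t-\Delta(h)+1}(\bar N))=\mathbb{A}^D$ is a \emph{set-theoretic} statement, so the pointwise bound $\rk(\bar N)\le t-\Delta(h)$ for every $N\in\T(L)$ (supplied by the admissible-generators proposition) is already sufficient; no identical vanishing of polynomials is required. Moreover, your specific claim that every $(t-\Delta(h)+1)\times(t-\Delta(h)+1)$ submatrix of $\bar N$ has a zero row or column is not correct in general---in the example of Figure~\ref{fig:N}, the $6\times 6$ submatrix on rows $4$--$9$ and columns $1$--$6$ has neither, yet its determinant still vanishes because rows $4$--$7$ are supported only on columns $1$--$3$. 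The right structural observation is the one already made in the proof of that proposition: the square of $1$'s in $U$ forces a block of columns of $\bar N$ whose nonzero entries live in too few rows, capping the rank at $t-\Delta(h)$. But again, you do not need this refinement for the theorem as stated.
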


Note that a consequence of this result is that ideals that have the minimum minimal number of generators admissible for a given $h$, namely those in $V_{\Delta(h)+1}(L)$, are dense in $\Hilb^h(\res[\![x,y]\!])$, as proved in \cite[Proposition III.2.1]{Bri77}.
In particular, if $\Delta(h)=1$, then $V_{\textrm{CI}}(L)$ is the quasi-affine variety obtained by removing $t-1$ hyperplanes from $\mathbb{A}^D$.

\begin{proposition}[Homogeneous sub-cell]\label{prop:hom}
$V_{\mathrm{hom}}(L)$ is a $D_h$-dimensional affine space, where $D_h$ is the right-hand side of \eqref{eq:dimhom}. Moreover, for any $J=I_t(H+N)$ its initial ideal $J^\ast$ can be computed by projecting $N$ into that affine space. 
\end{proposition}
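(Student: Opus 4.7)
The plan is to identify $V_{\mathrm{hom}}(L)$, as a subset of the affine space $V(L) \cong \T(L)$ provided by \Cref{thm}, with the linear subspace of those matrices $N$ whose nonzero entries $n_{i,j}$ are each a scalar multiple of $y^{u_{i,j}}$; in particular this forces $0 \leq u_{i,j} < d_j$ at every nonzero position. One inclusion is direct: if $N$ has this shape, then $H+N$ is homogeneous with respect to the grading prescribed by the degree matrix $U$, so its maximal minors are homogeneous of the expected degree, and hence $J=I_t(H+N)$ is homogeneous.

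The reverse inclusion will be obtained in tandem with the projection claim, by proving the single identity $J^\ast = I_t(H + N^\ast)$, where $N^\ast$ denotes the entrywise projection of $N$ onto its degree-$u_{i,j}$ components. By \Cref{rem:localstructure}, the maximal minors of $H+N$ form a $\otau$-enhanced standard basis of $J$, so $J^\ast$ is generated by their initial forms, and it suffices to identify these initial forms with the maximal minors of $H + N^\ast$. The core observation is that the degree matrix admits the decomposition $u_{i,j} = (m_j - j + 1) - (m_{i-1} - i + 1)$, so for every permutation $\sigma$ appearing in the Leibniz expansion of the $k$-th minor the sum $\sum_{i \neq k} u_{i,\sigma(i)}$ telescopes to the constant $t - k + 1 + m_{k-1}$, which is precisely the degree of the $k$-th monomial generator of $L$. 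Because $\ord((H+N)_{i,j}) \geq u_{i,j}$ by the defining constraints of $\T(L)$, every Leibniz summand has order at least this common value, and extracting its degree-$(t-k+1+m_{k-1})$ component amounts to replacing each factor by its degree-$u_{i,\sigma(i)}$ part, i.e., by the corresponding entry of $H + N^\ast$. Summing over $\sigma$ yields the identity. If $J$ is homogeneous, then $\Phi_L(N) = J = J^\ast = \Phi_L(N^\ast)$, and the injectivity of $\Phi_L$ from \Cref{thm} forces $N = N^\ast$.

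Once the identification is established, the dimension statement follows at once: the free parameters of an admissible $N^\ast$ are in bijection with the positions $(i,j)$ satisfying $j < i$ and $0 \leq u_{i,j} < d_j$, whose cardinality is already computed to be $D_h$ in the proof of \Cref{prop:dimhom}. The main technical point I expect to require care is the Leibniz step: one must verify that the lowest-order contribution of each permutation is genuinely captured by the projection $N^\ast$ and not absorbed into a higher-order term through cancellation. This relies on the row-minus-column decomposition of $u_{i,j}$ and on the exact equalities $\ord(y^{d_j}) = u_{j,j}$ and $\ord(-x) = u_{j+1,j} = 1$ that are attained on the diagonal and subdiagonal of $H$, together with the observation that the forced zeros of $N^\ast$ (at positions with $u_{i,j} < 0$ or $u_{i,j} \geq d_j$) cannot participate in a nonvanishing summand.
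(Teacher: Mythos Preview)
Your argument is correct and follows the same route as the paper: define $N^\ast$ as the entrywise degree-$u_{i,j}$ projection, establish $J^\ast = I_t(H+N^\ast)$, and deduce $N = N^\ast$ for homogeneous $J$ from the injectivity of $\Phi_L$ in \Cref{thm}, with the dimension count inherited from \Cref{prop:dimhom}. The only difference is that the paper simply asserts the identity $I_t(H+N^\ast) = J^\ast$, whereas your Leibniz/telescoping computation (via $u_{i,j} = (m_j - j + 1) - (m_{i-1} - i + 1)$) supplies an explicit justification for it.
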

\begin{proof}
The maximal minors $f_0,\dots,f_t$ of $H+N$ are a $\otau$-enhanced standard basis of $J=I_t(H+N)$ by construction. By \Cref{rem:localstructure}, they also form a standard basis of $J$, namely $J^\ast=(f_0^\ast,\dots,f_t^\ast)$. 
Let us call $N^\ast$ the matrix obtained by only keeping the terms of degree $u_{i,j}$ in each entry $N_{i,j}$.
Note that $N^\ast\in \T(L)$ and $I_t(H+N^\ast)=J^\ast$. 

We claim that an ideal $J=I_t(H+N)$, for some $N\in \T(L)$, is homogeneous if and only the maximal minors $f_0,\dots,f_t$ of $H+N$ are homogeneous.
Indeed, homogeneity of a system of generators yields homogeneity of $J$. Conversely, if $J$ is homogeneous then $J=I_t(H+N)=I_t(H+N^\ast)$, thus $N=N^\ast$ by \Cref{thm}.
\end{proof}

\section{A Cellular Decomposition of the punctual Hilbert scheme}\label{sec:conjecture}

In this section we leave the class of lex-segment ideals and consider general zero-dimensional 
monomial ideals \(E\) in \(\res[\![x,y]\!]\). 
The collection of all Gr\"obner cells \(V(E)\) forms a cellular decomposition of the
punctual Hilbert scheme \(\Hilb^n(\res[\![x,y]\!])\).
A cellular decomposition which additionally respects the Hilbert function stratification induces a cellular decomposition 
of \(\Hilb^h(\res[\![x,y]\!])\), the stratum of \(\Hilb^n(\res[\![x,y]\!])\) with a prescribed Hilbert function \(h\). 
When working over \(\mathbb{C}\) a cellular decomposition of a scheme can be used to calculate its Betti numbers. This was done in \cite{ES} 
for the Hilbert schemes of points of \(\mathbb{P}^2\), \(\mathbb{A}^2\) and the punctual Hilbert scheme \(\Hilb^n(\res[\![x,y]\!])\). In \cite{Got90} this result is refined for the stratum \(\Hilb^h(\res[\![x,y]\!])\). Both papers
use representation theoretical
methods without giving explicit parametrizations of the cells.
We conjecture what should be the parametrization of Gr\"obner cells for general \(E\) in terms of Hilbert-Burch matrices and give some detailed examples and evidence for our conjecture.

In \cite[Proposition~5.1/Definition~5.2]{HW21} we showed that the set of matrices \(\T(E)\) from \Cref{def:setT2} parametrizes a subset of \(V(E)\),
namely those ideals \(J\) in \(V(E)\) that admit a \(\otau\)-enhanced standard basis that also forms a Gr\"obner basis of the ideal \(J \cap \res[x,y]\) with respect to the lexicographic ordering. We proved that for the class of monomial ideals \(E=(x^t,x^{t-1}y^{m_1},\dots, y^{m_t})\) that satisfy
\begin{equation}\label{lexGBcond}
m_j-j-1 \le m_i-i    \mbox{ for all } j<i,
\end{equation} this is already the whole Gr\"obner cell \(V(E)\). 
The class includes lex-segment ideals, but for example also allows \(m_i=m_{i+1}\) for exactly one \(i\).
For general \(E\) there exist ideals \(J \in V(E)\) which do not admit such a \(\otau\)-enhanced standard basis. 
What occurs in this situation is that \(\Ltlex(J \cap \res[x,y]) \not= E\), for an example check \cite[Lemma~5.12,~Example~5.13]{HW21}. 
In these cases, to obtain \(J\) as the ideal of maximal minors of \(H+N\) it is no longer enough to consider lower triangular matrices \(N\).

\begin{definition}\label{def:setTgen}
For a monomial ideal \(E \subset \res[\![x,y]\!]\) we denote by $\Nd(E)$ the set of matrices $N=(n_{i,j})$ of size $(t+1)\times t$ with entries in $\res[y]$ such that its non-zero entries satisfy
\begin{itemize}
\item $u_{i,j}+1 \leq \ord(n_{i,j}) \leq \deg(n_{i,j}) <d_i$ for any $i\leq j$, 
\item $u_{i,j} \leq \ord(n_{i,j}) \leq\deg(n_{i,j})<d_j$ for any $i> j$.
\end{itemize}
\end{definition}
Note that the set \(\T(E)\) is a  subset of \(\Nd(E)\), 
and for ideals satisfying condition \eqref{lexGBcond} \(\T(E)=\Nd(E)\), since the inequality \(u_{i,j}+1>d_i\) holds for \(i \le j\), and thence yields \(n_{i,j}=0\) for all \(i \leq j\).

\begin{conjecture}\cite[Conjecture~5.14]{HW21}
\label{Conj}
Let $E \subset \res[\![x,y]\!]$ be a zero-dimensional monomial ideal. 
Then
$$\begin{array}{r r c l}
\Phi_E: & \Nd(E) & \longrightarrow & V(E)\\
& N & \longmapsto & I_t(H+N)
\end{array}$$

\noindent 
is a bijection. 
\end{conjecture}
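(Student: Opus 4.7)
The plan is to establish the bijection by following the three-step strategy of \Cref{thm}: well-definedness ($\Lt(I_t(H+N))=E$ for every $N\in\Nd(E)$), injectivity, and surjectivity of $\Phi_E$. The first two steps should generalize the lex-segment arguments cleanly; surjectivity carries the real content.

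For well-definedness, I would expand the $i$-th maximal minor $f_i$ of $H+N$ (obtained by deleting row $i+1$) and check that its leading term under $\otau$ is the generator $x^{t-i}y^{m_i}$ of $E$. The bounds in \Cref{def:setTgen} are designed precisely for this: the lower bound $\ord(n_{i,j})\geq u_{i,j}$ (or $u_{i,j}+1$ on or above the diagonal) forces any term that substitutes an $n$-entry for the corresponding $H$-entry to have strictly larger total degree than the candidate leading monomial, while the upper bound $\deg(n_{i,j})<d_i$ (resp.\ $<d_j$) rules out cancellations that could bring the degree back down. It follows that $\{f_0,\dots,f_t\}$ is a $\otau$-enhanced standard basis and $\Lt(I_t(H+N))=E$.

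For injectivity, suppose $\Phi_E(N)=\Phi_E(N')$. The columns of $H+N$ and $H+N'$ both generate the syzygy module of the common reduced $\otau$-enhanced standard basis $\{f_0,\dots,f_t\}$, so $H+N'=(H+N)A$ for some $A\in\operatorname{GL}_t(R)$. A filtration argument on the $\otau$-orders of the columns (each column $j$ being pinned by its distinguished low-order entry $y^{d_j}$ on the diagonal) should force $A=\operatorname{Id}$, and hence $N=N'$ within the sharp bounds of \Cref{def:setTgen}.

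The main obstacle is surjectivity. Given $J\in V(E)$, Hilbert--Burch produces a $(t+1)\times t$ matrix $M$ with $I_t(M)=J$ whose columns encode the syzygies of the reduced $\otau$-enhanced standard basis of $J$. The task is to normalize $M$ into the form $H+N$ with $N\in\Nd(E)$ using only invertible row scalings and elementary column operations, which preserve $I_t(M)$. Under the condition \eqref{lexGBcond}, the reduced standard basis is simultaneously a lex-Gr\"obner basis of $J\cap\res[x,y]$, which automatically triangulates the syzygy matrix; this shortcut fails in general (cf.\ \cite[Example~5.13]{HW21}), and genuine upper-triangular corrections $n_{i,j}$ with $i\leq j$ appear. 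I would attempt an inductive reduction, processing the columns of $M$ in an appropriate order and using the diagonal $y^{d_j}$ of column $j$ together with the sub-diagonal $-x$ as pivots to push the remaining entries into the allowed ranges $u_{i,j}+1\leq\ord\leq\deg<d_i$ (above the diagonal) and $u_{i,j}\leq\ord\leq\deg<d_j$ (below it). The delicate point is to check that this reduction is well-defined and terminates while simultaneously respecting all the bounds: each column operation aimed at normalizing a single entry perturbs several others, and controlling this interaction between the upper- and lower-triangular parts is precisely what makes the general case substantially harder than \Cref{thm}. The Julia experiments described in \Cref{sec:conjecture} provide strong computational evidence that such a normalization exists, but a rigorous proof will demand a careful bookkeeping argument.
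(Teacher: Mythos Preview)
The statement you are attempting to prove is not proved in the paper: it is stated as a \emph{conjecture} (\Cref{Conj}, reproducing \cite[Conjecture~5.14]{HW21}), and the paper offers only partial and computational evidence. Specifically, the paper notes that well-definedness of $\Phi_E$ already follows from \cite[Lemma~4.4]{HW21}, that the bijection is established for lex-segment ideals by \Cref{thm}, and then reports numerical checks (matching dimension vectors against the Ellingsrud--Str\o mme Betti numbers for $n\le 50$, and direct verification via reduced $\otau$-enhanced standard bases for $n\le 7$). There is no proof of injectivity or surjectivity for general $E$ to compare your proposal against.

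Your own write-up is honest about this: you lay out the three-step template from \Cref{thm}, observe that well-definedness should go through (and indeed it does, by the cited lemma), sketch an injectivity argument, and then concede that surjectivity ``will demand a careful bookkeeping argument'' that you do not supply. So what you have is a proof \emph{strategy}, not a proof, and it is essentially the same strategy the authors of \cite{HW21} pursued and left open. The genuine gap is exactly where you locate it: for monomial ideals violating condition~\eqref{lexGBcond}, the reduced $\otau$-standard basis need not be a lex-Gr\"obner basis, the syzygy matrix is not a priori lower-triangular, and no normalization procedure reducing an arbitrary Hilbert--Burch matrix of $J\in V(E)$ into the shape $H+N$ with $N\in\Nd(E)$ has been written down. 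Until that column-reduction argument is made rigorous---with a termination proof controlling the interaction between upper- and lower-triangular entries---the statement remains conjectural, both in the paper and in your proposal.
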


By \cite[Lemma~4.4]{HW21} it is clear that \(\Phi_E\) is well-defined and that the minors form a \(\otau\)-enhanced standard basis of \(I_t(H+N)\).
By \Cref{thm}, \Cref{Conj} holds for lex-segment ideals.
If the conjecture holds for all monomial ideals, we obtain a cellular decomposition of the whole punctual Hilbert scheme 
and can define a canonical Hilbert-Burch matrix for all ideals without considering a change of coordinates.
This decomposition induces a cellular decomposition of \(\Hilb^h(\res[\![x,y]\!])\). 
This is not the case for the cellular decomposition given in \cite{CV08} 
because the Hilbert function of ideals in a cell can vary, see \cite[Example~5.11]{HW21}.

Note that even without a proof of \Cref{Conj}, we can still prove a version of \Cref{prop:hom} for homogeneous sub-cells of Gr\"obner cells of general \(E\). This result agrees with \cite[Corollary~1]{CV08} and shows that even though their Gr\"obner cells differ, the subset of homogeneous ideals agrees. 
This is no surprise, since for homogeneous \(f\) we have that \(\Lt(f)=\Ltlex(f)\) by definition.

\begin{lemma} \label{homo}
The homogeneous sub-cell \(V_{\mathrm{hom}}(E)\) of \(V(E)\) is an affine space of dimension \(\# \{(i,j) \ | \ i >j, 0\le u_{i,j} < d_j \}\).
\end{lemma}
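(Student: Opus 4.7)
The plan is to exploit the identity $\Lt(f)=\Ltlex(f)$ for every homogeneous $f$, remarked just before the lemma, which follows immediately from the definition of $\otau$ (it agrees with $\tau=\lex$ on monomials of equal degree). This yields $V_{\mathrm{hom}}(E)=V_\lex(E)\cap\{J:J=J^\ast\}$, so the problem reduces to identifying the homogeneous ideals in the polynomial ring whose lex leading ideal is $E$, a setting where the Conca--Valla machinery applies to all monomial ideals, not only lex-segment ones.

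Next I would introduce the candidate affine space $A\subset\Nd(E)$: a matrix $N$ lies in $A$ if and only if $n_{i,j}=c_{i,j}\,y^{u_{i,j}}$ for some $c_{i,j}\in\res$ whenever $i>j$ and $0\le u_{i,j}<d_j$, while $n_{i,j}=0$ in all other positions. By construction $A\cong\mathbb{A}^{\#\{(i,j)\mid i>j,\,0\le u_{i,j}<d_j\}}$. For any $N\in A$, every non-zero entry of $H+N$ in position $(i,j)$ is a scalar multiple of $y^{u_{i,j}}$, so the $t\times t$ minors of $H+N$ are all homogeneous; combined with \cite[Lemma~4.4]{HW21}, this shows that $I_t(H+N)\in V_{\mathrm{hom}}(E)$, producing a well-defined map $\Phi_E|_A\colon A\to V_{\mathrm{hom}}(E)$.

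I would then verify that $\Phi_E|_A$ is a bijection. For injectivity, by \cite[Lemma~4.4]{HW21} the $t+1$ maximal minors of $H+N$ form a $\otau$-enhanced standard basis with prescribed leading terms $x^{t-j}y^{m_j}$; reducing each minor modulo the others to kill all other monomials of $E$ recovers a canonical reduced standard basis of the ideal, so both the standard basis and the Hilbert--Burch matrix normalized with diagonal $y^{d_i}$ and sub-diagonal $-x$ are uniquely determined by $I_t(H+N)$. For surjectivity, given a homogeneous $J\in V(E)$ I would take its reduced homogeneous standard basis $f_0,\dots,f_t$ with $\Lt(f_j)=x^{t-j}y^{m_j}$, read off a Hilbert--Burch syzygy matrix, and apply homogeneity-preserving row and column operations to bring it into the form $H+N'$ with $N'\in\Nd(E)$; the homogeneity of every entry together with the degree window $u_{i,j}\le\deg(n'_{i,j})<d_j$ then forces each non-zero entry of $N'$ to be a scalar multiple of $y^{u_{i,j}}$, i.e.\ $N'\in A$.

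The main obstacle is this surjectivity step for general (not necessarily lex-segment) $E$, where \Cref{thm} is unavailable and \Cref{Conj} cannot be invoked. The resolution is precisely the identification made in the first paragraph: for homogeneous ideals the entire argument takes place inside $V_\lex(E)$, where the Conca--Valla parametrization \cite{CV08} supplies exactly the required normalization of the Hilbert--Burch matrix for any monomial ideal. This perspective also explains transparently why the resulting dimension agrees with \cite[Corollary~1]{CV08}, and specializes to \Cref{prop:hom} when $E$ is a lex-segment ideal.
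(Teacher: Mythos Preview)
Your proposal is correct and follows essentially the same route as the paper. The paper's proof invokes \cite[Proposition~5.1]{HW21} to place $V_{\hom}(E)$ inside $\phi_E(\T(E))$ (with uniqueness of $N$) and then observes that homogeneity of $J$ forces each entry of $N$ to be a scalar multiple of $y^{u_{i,j}}$; you reach the same conclusion by appealing directly to the Conca--Valla parametrization of $V_{\lex}(E)$, which is precisely the content underlying that proposition in the homogeneous case.
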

\begin{proof}
Homogeneous ideals \(J\) in \(V(E)\) are inside the set of ideals in \(V(E)\) 
admitting a canonical Hilbert-Burch matrix. This is clear since for homogeneous \(f\), 
the leading terms \(\Lt(f) = \Ltlex(f)\) coincide, so a homogeneous \(\otau\)-enhanced standard basis of \(J\)
will also form a lex-Gr\"obner basis of \(J \cap \res[x,y]\).
Thence by \cite[Proposition~5.1]{HW21} 
\(V_{\hom}(E) \subset \phi_E(\T(E))\), and there exists a unique \(N\) such that \(J=I_t(H+N)\). 
Now \(J\) is homogeneous if and only if  \(N \in \T(E)\) has only terms of degree \(u_{i,j}\). 
Thus \(V_{\hom}(E)\) is an affine space of dimension \( \# \{(i,j) \ | \ i >j, 0 \le u_{i,j} < d_j \}\).
\end{proof}

\begin{example}[A cellular decomposition of {$\Hilb^6(\res[\![x,y]\!])$}] \label{ex:n=6}
Let us consider \(n=6\). 
There are eleven partitions of 6, so the punctual Hilbert scheme \(\Hilb^6(\res[\![x,y]\!])\) has a cellular decomposition into eleven cells. 
There are four possible Hilbert functions \([1,2,3]\), \([1,2,2,1]\), \([1,2,1,1,1]\) and \([1,1,1,1,1,1]\). 

Note that this is significantly different from Briançon's table in \cite{Bri77} because there the author provides a representative of all possible analytic types of ideals in $\Hilb^6(\res[\![x,y]\!])$, whereas our cells contain ideals with a common leading term ideal but in general different analytic types coexist in the same cell (for example, ideals with different minimal number of generators).

The Hilbert function \([1,2,3]\) is only attained by the ideal \((x^3,x^2y,xy^2,y^3) = (x,y)^3\). 
By Proposition \ref{prop:dimCell} the dimension of this stratum is \[n-t-\sum_{l\geq 2}n_l\left(\begin{array}{c} l\\2\end{array}\right) = 6 - 3 - 1 \cdot \left(\begin{array}{c} 3\\2\end{array}\right)=0,\] and the 
Gr\"obner cell consists only of the single point corresponding to the monomial ideal itself. It is minimally generated by four elements.

There are six cells with Hilbert function \([1,2,2,1]\), see \Cref{1221}. 
Admissible parameters to obtain a homogeneous ideal, i.e. those of \(V_{\hom}(E)\), are indicated in bold.
The Hilbert function has only jumps of height 1, so the maximal dimensional cell among them, corresponding to \(m=(2,4)\), has dimension \(n-t = 6-2 = 4\). 
General ideals in this cell will be minimally generated by two elements. If \(c_2 =0\) the ideal is generated by three elements.

The other five cells with this Hilbert function are not lex-segment. The cells \([2,2,2]\) and \([3,3]\), corresponding 
to monomial complete intersection ideals \((x^3,y^2)\) and \((x^2,y^3)\), consist only of complete intersection ideals. 
The two cells \([1,1,2,2]\) and \([1,1,1,3]\) only contain ideals that are minimally generated by 3 elements while 
the cell \([1,1,4]\) contains ideals \(I,J\) with \(\mu(I) = 2 \) and \(\mu(J)=3\). Notice that \(\dim(V(E)) - \dim(V_{\hom}(E)) = 1\) for all \(E\) with Hilbert function
\([1,2,2,1]\).

\begin{figure}
\[
\begin{tabular}{c c c c c} \toprule m & boxes  & H+N & dimension & \(\mu\)  \\
  \toprule
    \hline
  & & [1,2,3] \\
  \hline
  \\
   \([1, 2, 3]\) &
  \( \begin{tikzpicture}[scale=0.25]
  \draw (0,0) -- (3,0) -- (3,1) -- (2,1) -- (2,2) -- (1,2) -- (1,3) -- (0,3) -- cycle;
  \draw (0,1) -- (2,1);
  \draw (0,2) -- (1,2);
  \draw (1,0) -- (1,3);
  \draw (2,0) -- (2,2);
  \end{tikzpicture}\)
  & \(
\left(
\begin{array}{c c c}
  y & 0 & 0\\
  -x & y & 0\\
  0 & -x & y\\
  0 & 0 & -x
\end{array}
\right)
\) & 0 & 4\\
\\

\hline
 & & [1,2,2,1] \\
 \hline
 \\
  \([1, 1, 2, 2]\) &
  \( \begin{tikzpicture}[scale=0.25]
  \draw (0,0) -- (0,2);
  \draw (1,0) -- (1,2);
  \draw (2,0) -- (2,2);
  \draw (3,0) -- (3,1);
  \draw (4,0) -- (4,1);
  \draw (0,2) -- (2,2);
  \draw (0,1) -- (4,1);
  \draw (0,0) -- (4,0);
  \end{tikzpicture}\)
 & \(
\left(
\begin{array}{c c c c}
  y & 0 & 0 & c_1\\
  -x & 1 & 0 & 0\\
  0 & -x & y & 0\\
  0 & 0 & -x & 1\\
  0 & 0 & 0 & -x
\end{array}
\right)
\) & 1 & 3\\
\\
  \([1, 1, 1, 3]\) &
  \( \begin{tikzpicture}[scale=0.25]
  \draw (0,0) -- (0,3);
  \draw (1,0) -- (1,3);
  \draw (2,0) -- (2,1);
  \draw (3,0) -- (3,1);
  \draw (4,0) -- (4,1);
  \draw (0,0) -- (4,0);
  \draw (0,1) -- (4,1);
  \draw (0,2) -- (1,2);
  \draw (0,3) -- (1,3);
  \end{tikzpicture}\)
   & \(
\left(
\begin{array}{c c c c}
  y & 0 & c_1 & 0\\
  -x & 1 & 0 & 0\\
  0 & -x & 1 & 0\\
  0 & 0 & -x & y^2\\
  0 & 0 & 0 & -x + \bm{c_2} y 
\end{array}
\right)
\) & 2 & 3\\
\\
  \([2, 2, 2]\) &
    \( \begin{tikzpicture}[scale=0.25]
  \draw (0,0) -- (0,2);
  \draw (1,0) -- (1,2);
  \draw (2,0) -- (2,2);
  \draw (3,0) -- (3,2);
  \draw (0,0) -- (3,0);
  \draw (0,1) -- (3,1);
  \draw (0,2) -- (3,2);
  \end{tikzpicture}\)

   & \(
\left(
\begin{array}{c c c}
  y^2 & 0 &  c_1 y\\
  -x + \bm{c_2} y  & 1 & 0\\
  0 & -x & 1\\
  0 & 0 & -x
\end{array}
\right)
\) & 2 & 2\\
\\
  \([1, 1, 4]\)&
  \( \begin{tikzpicture}[scale=0.25]
  \draw (0,0) -- (0,4);
  \draw (1,0) -- (1,4);
  \draw (2,0) -- (2,1);
  \draw (3,0) -- (3,1);
  \draw (0,4) -- (1,4);
  \draw (0,0) -- (3,0);
  \draw (0,1) -- (3,1);
  \draw (0,2) -- (1,2);
  \draw (0,3) -- (1,3);
  \end{tikzpicture}\)

   & \(
\left(
\begin{array}{c c c}
  y & 0 & 0\\
  -x & 1 & 0\\
  0 & -x & y^3\\
  \bm{c_1} & 0 & -x + \bm{c_2} y + c_3 y^2  
\end{array}
\right)
\) & 3 & 2,3\\
\\
  \([3, 3]\) &
      \( \begin{tikzpicture}[scale=0.25, rotate=90]
  \draw (0,0) -- (0,2);
  \draw (1,0) -- (1,2);
  \draw (2,0) -- (2,2);
  \draw (3,0) -- (3,2);
  \draw (0,0) -- (3,0);
  \draw (0,1) -- (3,1);
  \draw (0,2) -- (3,2);
  \end{tikzpicture}\)

  & \(
\left(
\begin{array}{c c}
  y^3 & 0\\
  -x + \bm{c_1} y + c_2 y^2  & 1\\
  \bm{c_3} y^2  & -x
\end{array}
\right)
\) & 3 & 2 \\
\\
  \([2, 4]\) &
    \( \begin{tikzpicture}[scale=0.25]
  \draw (0,0) -- (2,0);
  \draw (0,1) -- (2,1);
  \draw (0,2) -- (2,2);
  \draw (0,3) -- (1,3);
  \draw (0,4) -- (1,4);
  \draw (2,0) -- (2,2);
  \draw (1,0) -- (1,4);
  \draw (0,0) -- (0,4);
  \end{tikzpicture}\)

  & \(
\left(
\begin{array}{c c}
  y^2 & 0\\
  -x + \bm{c_1} y  & y^2\\
   \bm{c_2} + c_3 y & -x +\bm{c_4} y 
\end{array}
\right)
\) & 4 & 2,3 \\
\end{tabular}
\]
\caption{Gr\"obner cells for Hilbert functions [1,2,3] and [1,2,2,1]} \label{1221}
\end{figure}

There are two cells with Hilbert function \([1,2,1,1,1]\), see \Cref{12111}. Both contain ideals minimally generated by three elements 
(in case \(c_3 =0\) for \(E=(x^5,xy,y^2)\) or \(c_1=0\) for \(E=(x^2,xy,y^5)\)) and complete intersection ideals.
\begin{figure}
\[
\begin{tabular}{c c c c c} \toprule m & boxes  & H+N & dimension & \(\mu\)  \\
  \toprule
  \hline
\hline
& & \([1, 2, 1, 1, 1]\)\\
\hline
\\
\([1, 1, 1, 1, 2]\)&

 \( \begin{tikzpicture}[scale=0.25]
  \draw (0,0) -- (0,2);
  \draw (1,0) -- (1,2);
  \draw (2,0) -- (2,1);
  \draw (3,0) -- (3,1);
  \draw (4,0) -- (4,1);
  \draw (5,0) -- (5,1);
  \draw (0,0) -- (5,0);
  \draw (0,1) -- (5,1);
  \draw (0,2) -- (1,2);
  \end{tikzpicture}\)
   & \(
\left(
\begin{array}{c c c c c}
  y & 0 & c_1 & c_2 & c_3\\
  -x & 1 & 0 & 0 & 0\\
  0 & -x & 1 & 0 & 0\\
  0 & 0 & -x & 1 & 0\\
  0 & 0 & 0 & -x & y\\
  0 & 0 & 0 & 0 & -x
\end{array}
\right)
\) & 3 & 2,3 \\
\\
  \([1, 5]\) &
 \( \begin{tikzpicture}[scale=0.25]
  \draw (0,0) -- (2,0);
  \draw (0,1) -- (2,1);
  \draw (0,2) -- (1,2);
  \draw (0,3) -- (1,3);
  \draw (0,4) -- (1,4);
  \draw (0,5) -- (1,5);
  \draw (0,0) -- (0,5);
  \draw (1,0) -- (1,5);
  \draw (2,0) -- (2,1);
  \end{tikzpicture}\)

  & \(
\left(
\begin{array}{c c}
  y & 0\\
  -x & y^4\\
  c_1 & -x + \bm{c_2} y + c_3 y^2 + c_4 y^3 
\end{array}
\right)
\) & 4 &2,3\\
\\
\end{tabular}\]
\caption{Gr\"obner cells with Hilbert function [1,2,1,1,1]}\label{12111}
\end{figure}

The Hilbert function \([1,1,1,1,1,1]\) is attained by the monomial ideals \((x^6,y)\) and the lex-segment monomial ideal 
\((x,y^6)\). Both cells consist only of complete intersection ideals and have dimensions \(4\) and \(5\), see \Cref{111111}.
The cell of \(m=[6]\) is the dense open subset of the punctual Hilbert scheme \(\Hilb^6(\res[\![x,y]\!])\). 

Note that here the difference between the dimension of \(V(E)\) and \(V_{\hom}(E)\) is again constant for all \(E\) with the same Hilbert function: \(\dim(V(E))-\dim(V_{\hom}(E))\) is \(3\) for \([1,2,1,1,1]\) and \(4\) for \([1,1,1,1,1,1]\).

\begin{figure}
\[
\begin{tabular}{c c c c c} \toprule m & boxes  & H+N & dimension & \(\mu\)  \\
  \toprule
\hline
& & \([1, 1, 1, 1, 1, 1]\)\\
\hline \\
\([1]^6\) &

 \( \begin{tikzpicture}[scale=0.25]
  \draw (0,0) -- (0,1);
  \draw (1,0) -- (1,1);
  \draw (2,0) -- (2,1);
  \draw (3,0) -- (3,1);
  \draw (4,0) -- (4,1);
  \draw (5,0) -- (5,1);
  \draw (6,0) -- (6,1);
  \draw (0,0) -- (6,0);
  \draw (0,1) -- (6,1);
  \end{tikzpicture}\)

& \(
\left(
\begin{array}{c c c c c c}
  y & 0 & c_1 & c_2 & c_3 & c_4\\
  -x & 1 & 0 & 0 & 0 & 0\\
  0 & -x & 1 & 0 & 0 & 0\\
  0 & 0 & -x & 1 & 0 & 0\\
  0 & 0 & 0 & -x & 1 & 0\\
  0 & 0 & 0 & 0 & -x & 1\\
  0 & 0 & 0 & 0 & 0 & -x
\end{array}
\right)
\) & 4 & 2\\
  \([6]\) &
 \( \begin{tikzpicture}[scale=0.25, rotate=90]
  \draw (0,0) -- (0,1);
  \draw (1,0) -- (1,1);
  \draw (2,0) -- (2,1);
  \draw (3,0) -- (3,1);
  \draw (4,0) -- (4,1);
  \draw (5,0) -- (5,1);
  \draw (6,0) -- (6,1);
  \draw (0,0) -- (6,0);
  \draw (0,1) -- (6,1);
  \end{tikzpicture}\)
   & \(
\left(
\begin{array}{c}
  y^6\\
  -x + \bm{c_1} y + c_2 y^2 + c_3 y^3 + c_4 y^4 + c_5 y^5 
\end{array}
\right)
\) & 5 &2 \\
\\
\hline
\end{tabular}
\]
    \caption{Gr\"obner cells with Hilbert function [1,1,1,1,1,1]}
    \label{111111}
\end{figure}

As stated before, for \(E\) not satisfying condition \eqref{lexGBcond} the set \(\Nd(E)\) contains matrices with non-zero entries above the diagonal. This is the case for \(m = (1,1,2,2), (1,1,1,3), (2,2,2), (1,1,1,1,2)\) and \((1,1,1,1,1,1)\). 
For those ideals we checked by comparing to the reduced standard basis that \Cref{Conj} gives a parametrization of the Gr\"obner cell.

We can investigate the dimensions of the occurring cells. When we define \(a_i\) as the number of cells of dimension \(i\), we obtain \[a = (a_0,a_1,a_2,a_3,a_4,a_5) = (1,1,2,3,3,1).\]
This vector is an invariant of the space. 
\end{example}

When a scheme \(X\) over \(\mathbb{C}\) has a cellular decomposition with dimension vector \(a\) as defined in \Cref{ex:n=6}, then all other cellular decompositions of \(X\) will have the same dimension vector. 
It holds that \(a_i=b_{2i}(X)\), the \(2i\)-th Betti number of \(X\), and that the Betti numbers \(b_{2i+1}(X)=0\), see \cite[Theorem~4.4/4.5]{Bia73} and \cite[Chapter~19.1]{FU98}.

In \cite{ES} this method is used to calculate the Betti numbers of the Hilbert scheme of points of the projective plane, the affine plane and, of most interest to us, the punctual Hilbert scheme \(\Hilb^n(\res[\![x,y]\!])\). 
To state their theorem, we need the following definition.

\begin{definition}
Let \(l,n \in \mathbb{Z}_{>0}\), then we define \(P(n,l)\) as the number of partitions of \(n\) bounded by \(l\), i.e. as the number of sequences \(0 =m_0 \le m_1 \le \dots \le m_t \le l\) such that \(\sum_{i=1}^t m_i = n\). 
\end{definition}
\begin{theorem}\cite[Theorem~1.1~(iv)]{ES} The non-zero Betti numbers of the punctual Hilbert scheme are
\[b_{2i}(\Hilb^n(\res[[x,y]]))=P(i,n-i).\]
\end{theorem}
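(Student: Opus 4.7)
The plan is to combine the conjectural cellular decomposition of $\Hilb^n(\res[\![x,y]\!])$ by Gr\"obner cells with the general fact cited above from \cite{Bia73,FU98}: for a scheme $X$ over $\mathbb{C}$ admitting a cellular decomposition, $b_{2i}(X) = a_i$ and $b_{2i+1}(X) = 0$, where $a_i$ counts the cells of complex dimension $i$. Granting \Cref{Conj}, the cells $V(E)$, indexed by colength-$n$ monomial ideals $E$ or equivalently by partitions $\lambda \vdash n$ via the staircase correspondence, are affine spaces that partition $\Hilb^n(\res[\![x,y]\!])$. Thus the theorem reduces to the combinatorial identity
\[
a_i \;=\; \#\{E \text{ monomial, colength } n : \dim V(E) = i\} \;=\; P(i, n-i).
\]

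Next I would extract a shape-based formula for $\dim V(E)$ from the parametrization $\Phi_E : \Nd(E) \to V(E)$. Unwinding \Cref{def:setTgen} gives
\[
\dim V(E) \;=\; \sum_{i > j}\max\{0, d_j - \max(u_{i,j}, 0)\} + \sum_{i \le j}\max\{0, d_i - \max(u_{i,j}+1, 0)\},
\]
which I would translate, box by box of the Young diagram of the complement of $E$, into arm/leg data of the associated partition $\lambda$. \Cref{prop:dimCell} already carries out this translation for lex-segment ideals; the general case further requires tracking the upper-triangular contributions of $\Nd(E)$ visible in the non-lex-segment cells of \Cref{ex:n=6}. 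With a closed-form dimension formula in hand, the identity $a_i = P(i,n-i)$ would be proved either bijectively, by pairing each $\lambda \vdash n$ together with a suitable combinatorial marking with a partition of $i$ whose parts are at most $n-i$ in the spirit of the arm/leg decomposition of \cite{ES}, or via the generating function identity
\[
\sum_{n \ge 0} t^n \sum_{\lambda \vdash n} q^{\dim V(E_\lambda)} \;=\; \sum_{j \ge 0} \frac{t^j}{\prod_{k=1}^j \bigl(1-(qt)^k\bigr)},
\]
whose right-hand side equals $\sum_n t^n \sum_i P(i,n-i) q^i$ after substituting $n = i+j$ and applying the classical generating function $\sum_i P(i,j) x^i = \prod_{k=1}^j (1-x^k)^{-1}$.

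The main obstacle is twofold. First, \Cref{Conj} is proved only for monomial ideals satisfying \eqref{lexGBcond}, so to run the argument in full generality one must either settle the conjecture or fall back on the classical Białynicki--Birula decomposition attached to a generic one-parameter subgroup of the diagonal torus, in which case the cell dimensions are read off tangent-space weights rather than from $\Nd(E)$. Second, and more substantively, the combinatorial identity itself is where the bulk of the work lies: one has to track all contributions of Young-diagram boxes in cases where the Hilbert function has jumps of height $\ge 2$, producing the square blocks of $1$'s in the degree matrix $U$ displayed in \Cref{fig:N}, and match the resulting enumeration with the arm/leg count underlying $P(i, n-i)$.
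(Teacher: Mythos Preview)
The paper does not prove this theorem at all: it is quoted verbatim as \cite[Theorem~1.1~(iv)]{ES} and then \emph{used} as an external benchmark against which to test \Cref{Conj} computationally (``This result gives us a way of checking \Cref{Conj} for plausibility by checking if the number of monomial ideals $E$ with $\dim(\Nd(E))=i$ is equal to $P(i,n-i)$''). There is therefore no ``paper's own proof'' to compare against.

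Your proposal has a structural problem in this context. Your primary route assumes \Cref{Conj}, which is open, so it is not a proof but a conditional argument; and within the logic of the paper it is in fact circular, since the authors invoke the Ellingsrud--Str\o mme theorem precisely to gather evidence \emph{for} the conjecture, not the other way around. You do flag this and propose, as a fallback, to run the Bia\l ynicki--Birula decomposition for a generic one-parameter subgroup and read off cell dimensions from tangent weights. That is essentially the strategy of \cite{ES} itself, but you have not carried it out: the actual content---identifying the torus weights on $T_E\Hilb^n$ with arm/leg data of the partition and counting the negative-weight directions---is only alluded to, and the generating-function identity you write down is asserted rather than derived from the dimension formula. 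In short, the sketch points toward the correct classical argument but does not supply its substance, and the portion that leans on \Cref{Conj} cannot stand as a proof.
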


This result gives us a way of checking \Cref{Conj} for plausibility by checking if the number of 
monomial ideals \(E\) with \(\dim(\Nd(E))=i\) is equal to \(P(i,n-i)\).

We created a Julia (\cite{julia}) module that uses OSCAR.jl (\cite{OSCAR}) to calculate examples and perform this plausibility check. 
The module and Jupyter-notebooks with experiments are available at \url{https://github.com/anelanna/LocalHilbertBurch.jl}. 

The module can calculate the (conjectural) cellular decomposition for a given \(n\). For all partitions of \(n\) it creates a \texttt{Cell} that has the following properties: 
\begin{itemize}
    \item \texttt{m} -- the partition,  
    \item \texttt{E} -- its associated monomial ideal, 
    \item \texttt{d} -- the vector of differences, 
    \item \texttt{U} -- the degree matrix, 
    \item \texttt{hilb} -- the Hilbert function of the ideals in the cell, 
    \item \texttt{H} -- the canonical Hilbert-Burch matrix of the monomial ideal,
    \item \texttt{M} -- the general canonical Hilbert-Burch matrix \(H+N\),
    \item \texttt{N} -- the corresponding element in \(\Nd(E)\), 
    \item \texttt{I} -- the maximal minors of \(M=H+N\), and
    \item \texttt{dim} -- the dimension of the cell.
\end{itemize}

The vector of dimensions \(a\) can be computed with \texttt{sorted\_celllist(n)}, a function that returns a dictionary mapping an integer \(i\) to a vector with the cells of dimension \(i\).

The numbers of bounded partitions can be found in the On-Line Encyclopedia of Integer Sequences, \cite{OEIS},
as (diagonals in) A008284, or relabelled, and thus serving our purposes a bit better, as (diagonals in) A058398 (see \url{https://oeis.org/A008284} and \url{https://oeis.org/A058398)}. 

For \(n \le 50\) we calculated all dimension vectors of our proposed cellular decomposition and checked that they are the correct ones. 
A stronger evidence for \Cref{Conj} would be that \(\phi_E: \Nd(E) \to V(E)\) is an isomorphism of affine
spaces for all \(n \le 40\). We have checked this for \(n \le 7\) where we described \(V(E)\) by means
of the reduced \(\otau\)-enhanced standard bases.

\begin{example}[\Cref{ex:n=6} continued]
We can use the connection between cellular decompositions and Betti numbers described above
to calculate the Betti numbers of \(\Hilb^{h}(\res[\![x,y]\!])\) for \(\res=\mathbb{C}\) and \(h\) an admissible Hilbert function in the \(n=6\) case. We obtain the following Betti numbers:
\[
b_i(\Hilb^{(1,2,3)}(\res[\![x,y]\!])) = \begin{cases}
1, & i = 0;\\
0, & \mbox{otherwise}.
\end{cases}
\]

\[
b_i(\Hilb^{(1,2,2,1)}(\res[\![x,y]\!])) = \begin{cases}
 1, & i =2,8;\\
 2,& i =4,6;\\
0, & \mbox{otherwise}.
\end{cases}
\]

\[
b_i(\Hilb^{(1,1,1,2,1)}(\res[\![x,y]\!])) = \begin{cases}
 1, & i =6,8;\\
0, & \mbox{otherwise}.
\end{cases}
\]

\[
b_i(\Hilb^{(1,1,1,1,1,1)}(\res[\![x,y]\!])) = \begin{cases}
1, & i = 8,10;\\
0, & \mbox{otherwise.}
\end{cases}
\]
\end{example}

\begin{remark}
    More generally, we can describe the subscheme \(\Hilb^{(1,1,\dots,1)}(\res[\![x,y]\!])\)
    of the punctual Hilbert scheme for any \(n\). 
    There are only two cells with this Hilbert function, namely \(L=(x,y^n)\) and \(E=(x^n,y)\). 
    The dimension of the cell of the lex-segment ideal \(L\) is \(n-1\). 
    By considering the reduced \(\otau\)-enhanced standard basis of \(E\) we can verify that 
    \Cref{Conj} holds for \(E\) or just directly show that \(\dim(V(E))=n-2\).
    So the cellular decomposition has one cell of dimension \(n-2\) and one of dimension \(n-1\).
    Hence, the only non-vanishing Betti numbers are 
    \(b_{2(n-2)}(\Hilb^{(1,\dots,1)}(\res[\![x,y]\!]))=b_{2(n-1)}(\Hilb^{(1,\dots,1)}(\res[\![x,y]\!]))=1\).
    This agrees with the observations in \cite[Remark~2.2a]{Got90}.
\end{remark}

Another relevant observation from \Cref{ex:n=6} is that within each stratum
\(\Hilb^h(\res[\![x,y]\!])\) of the punctual Hilbert scheme 
\(\Hilb^n(\res[\![x,y]\!])\), the difference of dimensions between any local Gr\"obner cell and its homogeneous sub-cell is constant.
This fact actually follows from a result by Iarrobino:

\begin{theorem}{\cite[Theorem~2.11,~Theorem~3.14]{Iar77}}
The stratrum \(\Hilb^h(\res[\![x,y]\!])\) is a locally trivial bundle over
\( \Hilb^h_{\hom}(\res[\![x,y]\!])\)  having fibre, an affine space,
and a global section.
\end{theorem}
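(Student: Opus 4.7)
The plan is to exhibit both the affine-fibre bundle structure and the global section directly from canonical Hilbert-Burch matrices. First, I would define the \emph{specialization morphism}
\[
\pi \colon \Hilb^h(\res[\![x,y]\!]) \longrightarrow \Hilb^h_{\hom}(\res[\![x,y]\!]), \qquad J \longmapsto J^{*},
\]
which is well-defined on this stratum because, by \Cref{rem:localstructure}, any $\otau$-enhanced standard basis of $J$ is also a standard basis, so $J^{*}$ is generated by the initial forms and inherits the Hilbert function $h$. The tautological inclusion $s \colon \Hilb^h_{\hom} \hookrightarrow \Hilb^h$ satisfies $\pi \circ s = \mathrm{id}$, since a homogeneous $J$ equals $J^{*}$; this immediately provides the required global section.

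Second, I would build the affine-fibre structure cell by cell. Fix a monomial ideal $E$ with Hilbert function $h$ and assume \Cref{Conj}, which is unconditional in the lex-segment case by \Cref{thm}. Every $N \in \Nd(E)$ decomposes uniquely as $N = N^{\hom} + N^{+}$, where $(N^{\hom})_{i,j}$ keeps only the coefficient of $y^{u_{i,j}}$ (when $u_{i,j} \geq 0$) and $N^{+}$ collects the strictly higher-degree monomials. The argument of \Cref{prop:hom} shows $I_t(H + N^{\hom}) = (I_t(H + N))^{*}$, so $\pi|_{V(E)}$ is identified with the linear projection $(N^{\hom}, N^{+}) \mapsto N^{\hom}$; its fibres are affine spaces of dimension $\dim V(E) - \dim V_{\hom}(E)$, and $s|_{V_{\hom}(E)}$ is the slice $\{N^{+} = 0\}$. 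Hence on each Gr\"obner cell $\pi$ is a trivial product bundle compatible with the section.

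The main obstacle is upgrading this stratum-wise splitting to a locally trivial bundle over the whole of $\Hilb^h_{\hom}$, and this splits into two subproblems. First, the fibre dimension $\dim V(E) - \dim V_{\hom}(E)$ must be independent of the choice of $E$ with Hilbert function $h$, as was observed in \Cref{ex:n=6}; I would prove this by direct enumeration, combining \Cref{prop:dimCell} with \Cref{homo} and reading off the degree matrix. Second, one has to patch the cell-wise trivializations across cells $V(E)$ and $V(E')$ meeting in $\Hilb^h_{\hom}$. The natural tool here is the $\res^{*}$-action $t \cdot (x,y) = (tx, ty)$, whose fixed locus on $\Hilb^h$ is precisely $\Hilb^h_{\hom}$ and for which $\pi$ coincides with the flow as $t \to 0$; a Bia\l ynicki-Birula type argument on the ambient smooth loci, combined with $\res^{*}$-equivariance of the Hilbert-Burch parametrization, should yield the bundle structure. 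This last patching step is the hardest, because $H$ and $\Nd(E)$ are intrinsically tied to a single monomial ideal and transitions between cells are not given by linear changes of the parameters.
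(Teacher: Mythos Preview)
This theorem is not proved in the paper at all: it is quoted verbatim as a result of Iarrobino \cite[Theorems~2.11 and~3.14]{Iar77}, and the authors invoke it only to explain the empirical observation from \Cref{ex:n=6} that $\dim V(E)-\dim V_{\hom}(E)$ is constant across all monomial ideals $E$ with a fixed Hilbert function. There is therefore no proof in the paper to compare your proposal against.

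As a standalone attempt, your sketch has two genuine gaps. First, to get the cell-wise trivialization on every $V(E)$ you assume \Cref{Conj}, which in the paper is established only for monomial ideals satisfying condition~\eqref{lexGBcond}; without it you do not have the parametrization $N\mapsto I_t(H+N)$ on which your splitting $N=N^{\hom}+N^{+}$ rests. Second, the patching step you yourself flag as hardest is not resolved: invoking a Bia\l ynicki-Birula argument requires a smooth ambient scheme carrying the $\res^*$-action, but neither $\Hilb^n(\res[\![x,y]\!])$ nor its stratum $\Hilb^h$ is known to be smooth a priori (the punctual Hilbert scheme is singular in general), so \cite{Bia73} does not apply directly, and even granting smoothness the Bia\l ynicki-Birula cells need not be compatible with your Gr\"obner cells in the way you need. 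Iarrobino's original argument does not proceed through such a cellular decomposition, so your route, while suggestive, would require substantial additional work beyond what is in this paper.
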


The cellular decomposition of \(\Hilb^h(\res[\![x,y]\!])\) given by all Gr\"obner cells \(V(E)\) where \(E\) is a monomial ideal with Hilbert
function \(h\), induces a cellular decomposition of
\(\Hilb^h_{\hom}(\res[\![x,y]\!])\) by taking homogeneous sub-cells \(V_{\hom}(E)\).  By a combination of \Cref{prop:hom} and 
\Cref{homo} the fibration restricts to  \(V(E) \to V_{\hom}(E)\).
Since we know the dimension of \(V_{\hom}(E)\) by \Cref{homo}, we can
check (at least in examples) whether the difference
\(\dim(\Nd(E))-\dim(V_{\hom}(E))\) is constant for all \(E\) with the same Hilbert
function. If this holds, it implies that our conjectured cells have the right dimension.
Using our \texttt{Julia} module, we have checked this for all strata
\(\Hilb^h(\res[\![x,y]\!])\) of \(\Hilb^n(\res[\![x,y]\!])\) with \(n \le 50\), thus providing strong evidence for \Cref{Conj}.

\section*{Acknowledgements} 
We want to thank Lars Kastner for helping us to set up the Julia module and helping with technical issues. 
We thank the two anonymous referees for many useful comments on a preliminary version of this paper.
The first author wants to thank Anthony Iarrobino, Pedro Macias Marques, Maria Evelina Rossi, and Jean Vallès, organizers of the AMS-EMS-SMF Special Session on Deformation of Artinian Algebras and Jordan Types, for their invitation to the session.
\bibliographystyle{amsplain}
\bibliography{references}

\providecommand{\bysame}{\leavevmode\hbox to3em{\hrulefill}\thinspace}
\providecommand{\MR}{\relax\ifhmode\unskip\space\fi MR }
\providecommand{\MRhref}[2]{%
  \href{http://www.ams.org/mathscinet-getitem?mr=#1}{#2}
}
\providecommand{\href}[2]{#2}
\begin{thebibliography}{10}

\bibitem{Ber09}
Valentina {Bertella}, \emph{{Hilbert function of local Artinian level rings in
  codimension two.}}, {J. Algebra} \textbf{321} (2009), no.~5, 1429--1442.

\bibitem{julia}
Jeff Bezanson, Alan Edelman, Stefan Karpinski, and Viral~B Shah, \emph{Julia: A
  fresh approach to numerical computing}, SIAM review \textbf{59} (2017),
  no.~1, 65--98.

\bibitem{Bia73}
Andrzej {Bialynicki-Birula}, \emph{{Some theorems on actions of algebraic
  groups.}}, {Ann. Math. (2)} \textbf{98} (1973), 480--497.

\bibitem{Bri77}
Jo{\"e}l {Brian{\c{c}}on}, \emph{{Description de ${\rm Hilb}^n
  \mathbb{C}[[x,y]]$}}, {Invent. Math.} \textbf{41} (1977), 45--89.

\bibitem{BG74}
Jo{\"e}l {Brian{\c{c}}on} and Andre Galligo, \emph{D{\'e}formations distinguees
  d'un point de {{\(C^2\)}} ou {{\(R^2\)}}},  (1974).

\bibitem{CV08}
Aldo Conca and Giuseppe Valla, \emph{Canonical {Hilbert}-{Burch} matrices for
  ideals of {{\(k[x,y]\)}}}, Mich. Math. J. \textbf{57} (2008), 157--172.

\bibitem{Con11}
Alexandru {Constantinescu}, \emph{{Parametrizations of ideals in \(K[x,y]\) and
  \(K[x,y,z]\).}}, {J. Algebra} \textbf{346} (2011), no.~1, 1--30.

\bibitem{ES}
Geir Ellingsrud and Stein~Arild Str{\o}mme, \emph{On the homology of the
  {Hilbert} scheme of points in the plane}, Invent. Math. \textbf{87} (1987),
  343--352.

\bibitem{FU98}
William Fulton, \emph{Intersection theory.}, 2nd ed. ed., Ergeb. Math.
  Grenzgeb., 3. Folge, vol.~2, Berlin: Springer, 1998.

\bibitem{Gal74}
Andre Galligo, \emph{A propos du th{\'e}or{\`e}me de preparation de
  {Weierstrass}}, Fonctions de plusieurs {Variables} complexes, {Sem}.
  {Francois} {Norguet}, {Oct}. 1970 - {Dec}. 1973, {Lect}. {Notes} {Math}. 409,
  543-579 (1974)., 1974.

\bibitem{Got90}
Lothar G{\"o}ttsche, \emph{Betti numbers for the {Hilbert} function strata of
  the punctual {Hilbert} scheme in two variables}, Manuscr. Math. \textbf{66}
  (1990), no.~3, 253--259.

\bibitem{Gra83}
Michel Granger, \emph{G{\'e}om{\'e}trie des sch{\'e}mas de {Hilbert}
  ponctuels}, M{\'e}m. Soc. Math. Fr., Nouv. S{\'e}r. \textbf{8} (1983), 84.

\bibitem{GP08}
Gert-Martin Greuel and Gerhard Pfister, \emph{A {\bf {s}ingular} introduction
  to commutative algebra}, extended ed., Springer, Berlin, 2008.

\bibitem{HH}
J.R. Herzog and T.~Hibi, \emph{Monomial ideals}, Springer, 2010.

\bibitem{HW21}
Roser Homs and Anna-Lena Winz, \emph{{Canonical Hilbert-Burch matrices for
  power series}}, J. Algebra \textbf{583} (2021), 1--24.

\bibitem{Iar77}
Anthony~A. {Iarrobino}, \emph{{Punctual Hilbert schemes.}}, {Mem. Am. Math.
  Soc.} \textbf{188} (1977), 112.

\bibitem{OEIS}
OEIS~Foundation Inc., \emph{The on-line encyclopedia of integer sequences},
  Published electronically at https://oeis.org., 2023.

\bibitem{Mac27}
Francis~Sowerby {Macaulay}, \emph{Some properties of enumeration in the theory
  of modular systems}, Proceedings of the London Mathematical Society
  \textbf{s2-26} (1927), no.~1, 531--555.

\bibitem{Mor82}
Teo Mora, \emph{An algorithm to compute the equations of tangent cones},
  Proceedings of the European Computer Algebra Conference on Computer Algebra
  (Berlin, Heidelberg), EUROCAM ’82, Springer-Verlag, 1982.

\bibitem{OSCAR}
\emph{Oscar -- open source computer algebra research system, version
  0.11.3-dev}, 2023.

\bibitem{RS10}
Maria~Evelina {Rossi} and Leila {Sharifan}, \emph{Consecutive cancellations in
  betti numbers of local rings}, {Proc. Am. Math. Soc.} \textbf{138} (2010),
  no.~1, 61--73.

\bibitem{Ya89}
Joachim Yam{\'e}ogo, \emph{Sur l'alignement dans les sch{\'e}mas de {Hilbert}
  ponctuels du plan. {La} famille des {N}-uplets de {{\({\mathbb{P}}^ 2\)}}
  contenant au moins r points sur une droite. ({Alignment} in the punctual
  {Hilbert} schemes of the plane. {The} family of {N}-tuples containing at
  least r points on a line)}, Math. Ann. \textbf{285} (1989), no.~3, 511--525.

\end{thebibliography}
\end{document}